\documentclass[11pt,a4paper]{amsart}
\usepackage{geometry}
\usepackage{amsmath}
\usepackage{amssymb}
\usepackage{amsthm}
\usepackage{xcolor}

\newtheorem{theorem}{Theorem}
\newtheorem{lemma}[theorem]{Lemma}

\usepackage{amsmath,amsthm,amssymb,amsfonts}
\usepackage{epsfig}
\usepackage{graphicx}
\usepackage{bm}
\usepackage{dsfont}
\numberwithin{equation}{section}

\def\blem{\begin{lemma}}
\def\elem{\end{lemma}}
\def\bthm{\begin{theorem}}
\def\ethm{\end{theorem}}

\def\ve{\varepsilon}
\def\R{\mathbb{R}}
\def\N{\mathbb{N}}

\def\bproof{\begin{proof}}
\def\eproof{\end{proof}}

\def\bald{\begin{aligned}}
\def\eald{\end{aligned}}

\def\Hu{H_1}
\def\Hd{H_2}
\def\Hun{H_{1,n}}
\def\Hdn{H_{2,n}}
\def\Hin{H_{i,n}}
\def\Hi{H_{i}}

\def\ds{\rightarrow}

\def\A{\mathbb{A}}

\def\T{\mathcal{T}}

\def\alp{\alpha}
\def\al0{\alpha_0}

\def\alu{\alpha_1}
\def\ald{\alpha_2}
\def\al0{\alpha_0}

\def\ali{\alpha_i}

\def\Iu{I_1}
\def\Id{I_2}
\def\I0{I_0}

\def\Ii{I_i}

\def\l0{l_0}

\def\li{l_i}

\def\b0{b_0}
\def\bi{f_i}
\def\bu{f_1}
\def\bd{f_2}

\def\ga{\gamma}

\def\ga0{\gamma_0}

\def\apg{\left\{}
\def\chg{\right\}}

\newcommand{\be}{\begin{equation}}
\newcommand{\ee}{\end{equation}}
\newcommand{\baa}{\begin{array}}
\newcommand{\eaa}{\end{array}}
\newcommand{\ba}{\begin{eqnarray}}
\newcommand{\ea}{\end{eqnarray}}

\newcommand{\ban}{\begin{eqnarray*}}
\newcommand{\ean}{\end{eqnarray*}}

\newcommand{\dis}{\displaystyle}
\newtheorem{theo}{\bf Theorem}[section]
\newtheorem{lem}[theo]{\bf Lemma}
\newtheorem{pro}[theo]{\bf Proposition}

\newtheorem{defi}[theo]{\bf Definition}
\newtheorem{rem}[theo]{\bf Remark}

\title[(HJ)-equation with $t$-measurable Hamiltonians on 1-dimentional junction]{On Hamilton Jacobi equations with time measurable Hamiltonians posed on a 1-dimensional junction}

\author[A. Briani]{Ariela Briani}
\address[A. Briani]{Université de Tours, 
Université d’Orléans, CNRS, IDP, UMR 7013, Tours, France.}
\email{ariela.briani@univ-tours.fr}

\thanks{A. Briani was partially supported by l’Agence Nationale de la Recherche (ANR), project
ANR-22-CE40-0010 COSS}

\begin{document}
\begin{abstract}

In this paper, we study evolutive Hamilton–Jacobi equations with Hamiltonians that are discontinuous in time, posed on a simple network consisting of two edges on the real line connected at a single junction. \\
We introduce a notion of (flux-limited) viscosity solution for Hamiltonians $H_i=H_i(t,x,p), (i=1,2$) that are assumed to be only measurable in $t$, with $H_i(\cdot,0,0)$ in $ L^1(0,T)$. 
The flux limiter,  $A=A(t)$, acting at the junction, is not required to be continuous but only in $L^\infty(0,T)$. \\
In the case of convex Hamiltonians, we prove a comparison principle and establish an existence result via the construction of an  optimal control problem. Generalizations to the nonconvex case and to more general networks are also discussed.

\end{abstract}

\maketitle 

\tableofcontents
\allowdisplaybreaks

\section{Introduction} 

In this paper, we study viscosity solutions of Hamilton–Jacobi (HJ) equations featuring two distinct types of discontinuities. On the one hand, the Hamiltonians are assumed to be merely measurable with respect to the time variable $t$.  On the other hand, they exhibit spatial discontinuities arising from the underlying structure of the problem. Precisely, we consider a simple network  with two different Hamiltonians $H_i$ ($i=1,2$) on the line with one junction. Unless otherwise specified, all solutions are understood in the viscosity sense.

Starting from the pioneering work of H. Ishii in 1985 \cite{Is85}, it was clear that  when dealing with the {\it discontinuous case}- meaning a situation where both the Hamiltonian and the solution are allowed to be discontinuous-  for classical of HJ-equations of the form 
\be \label{EQ0}
 u_t + H(t,x,u,Du)=0 \mbox{ for } (x,t) \in \R^N \times (0,T)
\ee
an {\it ad-hoc} treatment has to be given to discontinuities on  the $t$-variable. This is mainly due to the special position of the $t$-derivative in the equation.  In fact, H. Ishii himself studied this case separately from other situations involving discontinuities. Incidentally, let us notice that it is natural to assume that solutions to \eqref{EQ0} are continuous (in both variables), for, roughly speaking, one has to integrate $H$ in time. In the sequel we will refer to the {\it t-measurable case} when the Hamiltonian $H$ is  measurable in $t$ and continuous in the remaining variables. \\
We think it is very interesting to recall here the key idea behind the first definition as it is explained  by H. Ishii himself (see \cite[Definition 7.1, p.52]{Is85}) in the simplest case of  ODEs:  \\  
{\em 
Consider  $u^\prime(t)+H(t)=0$ for $t \in (0,T)$ and  let $H(t)=g(t)+b(t)$ with $g \in C(0,T)$ and $b \in L^1(0,T)$.  \\
 The key remark is that  an absolutely continuous function $u$ solves $u^\prime(t)+H(t)=0$  a.e. in $(0,T)$ if and only if $u \in C(0,T)$ and $v(t)=u(t)+\int_0^t b(s) \: ds $ is a viscosity solution of $v^\prime (t)+g(t)=0$ in $(0,T)$.  }
 
In 1987, P.L. Lions and B. Perthame  provided three equivalent formulations of the notion of solution given by H. Ishii (see \cite{LiPer87}).  
Extension to the second order case have been studied by D. Nunziante in \cite{Nu90}, \cite{Nu92}.  With these definitions in hand, it has been common belief in the community that, roughly speaking, when a (comparison, existence, regularity, . . . ) result holds true when the Hamiltonian is continuous in  \eqref{EQ0} it is still valid when one weakens the hypothesis of continuity in $t$ to mere measurability. 
However, to prove each  parallel improvement for t-measurable Hamiltonians may appear as a bit tedious  job because of the necessarily involved technicalities. 

In \cite{BrRam} the author and F. Rampazzo studied approximation  general results that allows to establishing comparison, existence, and regularity results by deriving them from the analogous results for the case of continuous Hamiltonians for equation \eqref{EQ0}. In this paper we exploit these ideas to study HJ-equations on regional problem or networks in the {\it t-measurable case}. 

When the Hamiltonian is $t$-continuous, starting from 2010's a lot of work has been devoted to the study of deterministic control problems and HJ-equations on networks or involving discontinuities in space and, more precisely problems where dynamics and running costs may be completely different in different parts of the domain. We chose to recall here only the articles that are more related to our work, a complete bibliography can be found in the book \cite{BaChas} where all these approaches are collected, links between them are explained and generalizations are provided.
\\  
The author, with G. Barles and E. Chasseigne studied the so called regional control problems in \cite{BBC1}, \cite{BBC2} while C. Imbert and R. Monneau,   with H. Zidani, introduced the concept of Flux-limited solutions for networks in \cite{ImMo0},   \cite{ImMo1}, \cite{ImMo2}. We will refer here mainly to \cite{BBCI} where the link between the two approaches is studied to define Flux-limited solutions on regional problems. \\ 
At the same time Y. Achdou, F. Camilli, A. Cutri and N. Tchou studied HJ-equations on networks in \cite{ACCT} and P.L. Lions ans P.E. Souganidis defined solution for junctions problems with Kirchoff-type conditions in \cite{LISou1}, \cite{LISou2}. Results on stratified problems have been firstly introduced by A. Bressan and Y. Hong in \cite{BreHong} and  then generalized by G. Barles and E. Chasseigne in \cite{BaChas2}, \cite{BaChas}.

Our first motivation is the optimal control problem studied by P. Cardaliaguet et P. Souganidis in \cite{CaSoug}, where merely measurable flux limiters are considered to model traffic flow through a junction on the line. For their goals, properties of map $u^A$  in \cite[Definition 2.1]{CaSoug} obtained via a representation formula, suffices. However, it is natural to  aim to give a definition of solution for the HJ-equation related to this formula.  Precisely, 
\be \label{eqHJmodIntro}
\left\{
\begin{array}{lcl}
u_t +\Hu(u_x )=0 & \mbox{in} & (0,+\infty) \times (0,T)   \\
u_t +\Hd(u_x)=0 & \mbox{in}  & (-\infty,0) \times (0,T)  \\
u_t(0,t)+ \max\{  A(t), \Hu^- (u_x(0^+,t) ), \Hd^+(u_x(0^-,t))\}=0& \mbox{ in }  &  (0,T)  \\
u(x,0)= u_0(x)& \mbox{ in } &  \R. 
\end{array}
\right.
\ee
where  the Hamiltonians are regular and uniformly convex and  $H_i^+$, $H_i^-$  denote the increasing  and decreasing part of $H_i$, respectively. We will refer to this case as the model problem.

Thus, the first problem we addressed is how to generalize the definition of viscosity solution for  \eqref{eqHJmodIntro}  assuming only $A(\cdot) \in L^\infty(0,T)$.  When the flux limiter $A$ is continuous,  starting from \cite{ImMo0} -\cite{ImMo1}  the viscosity solutions' theory has been developped mainly in terms of {\it Flux limited solutions}.  Here, we generalize this definition following  Ishii in \cite{Is85}. Roughly speaking,  the key ingredients are:  \\
\indent{--Test function will be the sum of two functions: one of the form $ \int_0^t b(s) ds$ with $b \in L^1(0,T)$ and the other in the set  $PC^1(\R \times (0,T))$ considered in  \cite{ImMo1} (see Definition \ref{defHcont2} below).} \\
\indent{-- Inequalities at touching point will not be asked to be fulfilled by the Hamiltonians but  by any  time continuous function $G$  that is {\it not $L^\infty$-far in the right direction} from the Hamiltonians  plus the test  function $b$.}

 Since in  \eqref{eqHJmodIntro}  the Hamiltonian $H_i$ only depends on the gradient variable  we need to adapt the definition  only at the  junction, i.e. when $x=0$.
Thus, the subsolution (resp. supersolution) condition for a continuous bounded  function $u$ will read as follows:   let   $b \in L^1(0,T)$ and $\psi \in PC^1(\R \times (0,T))$  and let 
$G\in C([0,T] \times \R \times \R)$ satisfy for some $\delta >0$, 
$$
 -b(t)+G (t,p_1,p_2) \leq \max\{ A(t), \Hu^+ (p_1), \Hd^-(p_2) \}       \quad    (resp. \geq )  
$$
 for $|p_1 - \psi_x(0^-,t_0) | \leq \delta$,  $|p_2 - \psi_x(0^+,t_0) | \leq  \delta$,  and a.e.  $|t - t_0| \leq \delta$.  \\
 If $(0,t_0)$ is  local maximum (resp. minimum) point of  $\dis u(x,t) - \big( \int_0^t b(s) ds + \psi(x,t)\big)$ then 
$$
 \begin{array}{c}
  \psi_t(0,t_0)+ G(t_0, \psi_x(0^-,t_0),\psi_x(0^+,t_0)) ) \leq 0  \quad    (resp. \geq  0) \: . \\
 \end{array}
$$

In this paper we consider the case of equation \eqref{eqHJmodIntro} with Hamiltonians depending also on space and time, i.e. $H_i=H_i(t,x,p)$, $i=1,2$.
Precisely, assuming only $t$-measurability for the Hamiltonians $H_i$ and  the flux limiter  $A$, we study the problem
\be
\label{eqHJintrp2}
\left\{
\begin{array}{lcl}
u_t+\Hu(t,x,u_x)=0 & \mbox{in} & (0,\infty) \times (0,T)   \\
u_t+\Hd(t,x,u_x)=0 & \mbox{in}  &  (-\infty,0) \times (0,T)  \\
u_t(x,0)+ \max\{  A(t), \Hu^- (t,0,u_x(0^+,t) ), \Hd^+(t,0,u_x(0^-,t))\}=0& \mbox{ in }  &  (0,T)  \\
u(x,0)= u_0(x)& \mbox{ in } &  \R. 
\end{array}
\right.
\ee

In this case we not only have to adapt the definition of Flux limited solution outside the junction but also to modify the above condition on the junction. This is the aim of Section   \ref{onedimconvex} where our definition of  {\em Flux Limited t-Measurable (FL-tm)} solution is introduced and  discussed  (see Definition \ref{defHmismod2} and Remarks \ref{confaltre},  \ref{disdipinx}).  \\
In line with our first motivation of giving a definition of the equation \eqref{eqHJmodIntro} studied in \cite{CaSoug},  we assume the Hamiltonians to be convex on the gradient variable. This hypothesis will allow us to exploit the construction of an optimal control problem to obtain an existence result.   Indeed, in Theorem \ref{teoexismod2} we will prove that the value function of an optimal control problem is a  (FL-tm)-solution of  \eqref{eqHJintrp2} as in Definition  \ref{defHmismod2}.

With the aim of deriving uniqueness results for equation \eqref{eqHJintrp2}, in Theorem \ref{compmod2LOC} we prove a comparison result for (FL-tm) sub and super solution. As explained in Section \ref{comp2} the proof is based an approximating technique with the same ideas as  in \cite{BrRam} and therefore two results are essentially needed. On one hand  it has to be possible to  construct  {\it good approximating} continuous Hamiltonians and flux limiter, i.e., roughly speaking a $t$-continuous approximating sequence that are somehow uniform w.r.t.  all other variables,  and, on the other,   a comparison result has to hold for the corresponding structure in the $t$-continuous case.

The advantage of this kind of proof is that this can be easily adapted to more general Hamiltonians and/or  geometries.  Indeed,  in Section \ref{gene}  we discuss how to 
use these ideas to deal with the following cases: we  weaken the convexity assumption on the $H_i$, we add a dependence on  the $u$-variable, i.e. $H_i=H_i(t,x,u,p)$,  and we 
discuss how to generalize the definition to problems with multiple junctions, i.e. on networks. \\
Let us point out here that when dealing with the model problem, equation  \eqref{eqHJmodIntro},  the comparison result can be proved assuming $H_i(\cdot)$  to be only continuous and coercive. This is possible thanks to the corresponding result for continuous flux limiter proved by  N. Forcadel and R. Monneau in  \cite[Theorem 6.1]{ForMon}.

With these  ideas,  we believe that one can study the $t$-measurable dependent case for more general geometries or  junction conditions, as for example to problem problems in $\R^N$ with discontinuity along an hyperplane or 
problems or Kirchhoff type conditions (see \cite{BaChas} for a comprehensive description of the results in the $t$-continuous case). 
The discussion on how to adapt the definition in  these cases is beyond the scope of this work.

This paper is organized as follows: In Section  \ref{onedimconvex} we set our assumptions and give the  definition of Flux-limited solution in the $t$-measurable case for equation  \eqref{eqHJintrp2}. In Section \ref{comp2} we prove a comparison result in this framework (Theorem \ref{compmod2LOC}) and discuss possible variants in  Remark \ref{gendim1}.
Section \ref{submodoc2} is devoted  to the construction of an optimal control problem whose value function is proved to be a (FL-tm) solution of \eqref{eqHJintrp2}. Finally, in Section \ref{gene}  we discuss how to weaken the convexity assumption in the  comparison result  and how to generalize these ideas to deal with networks.


\section{The definition of Flux limited $t$-measurable solution} \label{onedimconvex}

In this section we set the problem and introduce our  definition of  Flux Limited viscosity solution in the case of $t$- measurable Hamiltonian that we call  {\bf Flux Limited t-measurable (FL-tm)}.

Let $\Omega_1:=(0,\infty)$, $\Omega_2:=(-\infty,0)$ and $T>0$.  

We will call a $t$-measurable modulus a non-negative function $\omega=\omega(t,k)$ continuous, subadditive, non-decreasing in $k$ and measurable in $t$ such that $\omega(\cdot,k) \in L^1(0,T)$ for all $k$ and $\omega(t,0)=0$ a.e. 
If moreover,  $ \lim_{\delta \ds 0}  \int_0^T \omega(t,\delta) dt =0$ we will say that $\omega$ is a $t$-integrable modulus.

We assume:  
\be \label{ipdatoiniziale2} 
\mbox{The initial condition } u_0 : \R \ds \R  \quad \mbox{is Lipschitz continuous.} \\
\ee
The Hamiltonians $H_i : [0,T] \times \R  \times \R \ds \R$, $i=1,2$ 
\be \label{ipHami2reg}
H_i(t,x,p)  \mbox{ are measurable in  } t   \mbox{ and continuous in } (x, p). 
\ee
\be \label{ipHami2int}
H_i(t,0,0)  \in  L^1(0,T).
\ee
For each $R >0$, there exists $C_1=C_1(R) >0$ and a $t$-measurable modulus $\omega_R$  such that  
\be  \label{ipHam2inX} 
| H_i(t,x,p) - H_i(t,y,p)| \leq C_1 |x-y|  |p|+ \omega_R(t, |x-y|)  \quad  \forall \: |x|,|y| \leq  R \, , p \in \R \, , t \in [0,T] \,. 
 \ee
There exists  $C_2 >0$ such that 
\be  \label{ipHam2inP} 
| H_i(t,x,p) - H_i(t,x,q)| \leq C_2 |p-q| \quad \forall p,q \in \R, x \in \R, t \in [0,T].
 \ee
Moreover, 
\be \label{ipHami2Conv}
H_i(t,x, \cdot) : \R \ds \R \mbox{ are  convex  and coercive uniformly with respect to } (t,x).
\ee

Following  \cite{ImMo1} (see also \cite{BBCI})
we write $H_i^+$ and $H_i^-$ for the nondecreasing and nonincreasing part of $H_i(t,x,\cdot)$ respectively, that is, 
$$
H_i^+(t,x,p): = \left\{
\begin{array}{ll}
\dis \min_p H_i (t,x,p)  & \mbox{ if } p \leq \hat{p}^ i(t,x) \\
H_i (t,x,p) & \mbox{ otherwise }
\end{array}
\right.
H_i^- (t,x,p): = \left\{
\begin{array}{ll}
H_i (t,x,p)  & \mbox{ if } p \leq \hat{p}^ i (t,x) \\
  \dis \min_p H_i (t,x,p) & \mbox{ otherwise }
\end{array}
\right.
$$ 
where $\hat{p}^ i(t,x) $ is a  point such that $H_i (t,x,\hat{p}^ i(t,x) )= \min_p H_i (t,x,p)$.

We set, for each $t \in [0,T]$,
\be \label{ipA02} 
   \dis A_0(t):= \max\Big\{\min_{p \in \R} \Hu(t,0,p), \min_{p  \in \R} \Hd(t,0,p)\Big\} 
   \ee
and we consider the flux limiter $A : [0,T] \ds \R$ such that 
\be \label{ipA2} 
 A\in L^\infty([0,T]) \mbox{ and  } A(t) \geq A_0(t) \mbox{ for a.e. } t \in [0,T]. 
\ee

Under this set of assumptions we will define and study {\bf (FL-tm)} solution of 
\be \label{eqHJmod2}
\left\{
\begin{array}{lcl}
u_t(x,t)+\Hu(t,x,u_x(x,t))=0 & \mbox{in} & \Omega_1 \times (0,T)   \\
u_t(x,t)+\Hd(t,x,u_x(x,t))=0 & \mbox{in}  & \Omega_2 \times (0,T)  \\
u_t(x,0)+ \max\{  A(t), \Hu^- (t,0,u_x(0^+,t) ), \Hd^+(t,0,u_x(0^-,t))\}=0& \mbox{ in }  &  (0,T)  \\
u(x,0)= u_0(x)& \mbox{ in } &  \R. 
\end{array}
\right.
\ee

In the case of  $t$-continuous Hamiltonians $H_i$ and flux limiter $A$ one can  define solution following Imbert et Monneau  \cite{ImMo1}-\cite{ImMo2}. In particular, they proved in  \cite[Theorem 5.9]{ImMo1} existence and uniqueness for  {\it Flux limited}  solution of  equation \eqref{eqHJmod2}. The latest has also been studied in \cite{BBCI} where a different proof of comparison result is given. We will also refer to the results collected in \cite[Chapter 14]{BaChas}. \\
For the sake of completeness we recall below this definition where the following set of test functions is considered 
: $PC^1(\R \times (0,T))$ is the space of piecewise fonctions $\psi \in C(\R \times (0,T))$ such that there exist $\psi_1 \in C^1(\overline{\Omega_1} \times (0,T))$,  $\psi_2 \in C^1(\overline{\Omega_2} \times (0,T))$, such that $\psi=\psi_1$ in $\overline{\Omega_1} \times (0,T)$ and $\psi=\psi_2$ in $\overline{\Omega_2} \times (0,T)$. 

\begin{defi} \label{defHcont2} {\bf Flux Limited t-continuous (FL-tc)} \\
A locally bounded  function $u : \R \times (0,T) \ds \R$ is a  (FL-tc) subsolution (resp. supersolution) of  \eqref{eqHJmod2} in $\R \times (0,T)$  if  for any test function $\psi \in PC^1(\R \times (0,T))$ and any local maximum  (resp. minimum) point $(x_0,t_0)$  of $u^* - \psi$  (resp. $u_*- \psi$) in $\R \times (0,T)$ we have 
$$
\begin{array}{cl}
\mbox{ if } x_0=0 & \psi_t(0,t_0)+\max\{ A(t_0), \Hu^- (t_0,0,\psi_x(0^+,t_0) ), \Hd^+(t_0,0,\psi_x(0^-,t_0)) \} \leq 0   \quad \mbox{(resp. $\geq 0$)}  \\ 
 \\
\mbox{ if } x_0 \in \Omega_i & \psi_t(x_0,t_0)+ \Hi(t_0,x_0,\psi_x(x_0,t_0)) \leq 0  \quad  i=1,2 \:. \quad \mbox{(resp. $\geq 0$)} \\
\end{array}
$$
A locally bounded function $u$ is a (FL-tc) {\em solution} if it is both a sub and and a super-solution of  \eqref{eqHJmod2} and it verifies $u(x,0)= u_0(x)$ in $\R$.
\end{defi}
In the case of HJ equation in $\R^M$ with $t$-measurable Hamiltonians  solution have been firstly defined  by Ishii in 1985 (\cite{Is85}). We recall here this definition for equation 
\be \label{EqHJclassi}
\left\{
\begin{array}{lcl}
u_t +H(t, x, D_x u)=0 & \mbox{in} &  (0,T) \times \R^M \times \R^M  \\
u(x,0)= u_0(x)& \mbox{ in } &  \R^M 
\end{array}
\right.
\ee
where the Hamiltonian $H$ verifies regularity assumptions \eqref{ipHami2reg}-\eqref{ipHami2int}. In \cite{Is85} comparison results are obtained assuming that for any $R>0$, there exist a  $t$-measurable modulus $\omega_R$ such that 
\be \label{ipHamiIssGen}
| H(t,x,p)- H(t,y,q)|  \leq   \omega_R(t, |x-y|, |p-q|) \quad \forall |x|, |y|, |p|, |q| \leq R \:, t \in [0,T] . 
\ee
\vspace{0,2cm}
\begin{defi} \label{defHMis2} {\bf Ishii t-measurable (Is-tm)} \\
A locally bounded continuous function $u : \R^M \times (0,T) \ds \R$ is a  (Is-tm) subsolution (resp. supersolution)  of  \eqref{EqHJclassi}  in $(0,T)\times \R^M \times \R^M$ if: \\
if $b \in L^1(0,T)$, $G(t,x,p) \in C([0,T] \times \R^M \times \R^M) $   and $G$ satisfies for some $\delta >0 $, $(x_0, t_0) \in \R^M \times [0,T]$,  $\psi \in C^1( \R^M \times  (0,T))$

 $$
\begin{array}{c}
   -b(t)+G(t,x,p) \leq H(t,x, p)     \quad    (resp. \geq )  \\
   \\
\mbox{ for }  |x-x_0| \leq \delta \: , |p - D_x \psi(x_0,t_0) | \leq \delta  \mbox{ and a.e. }  |t - t_0| \leq \delta   \\
 \end{array}
$$
 and if $\dis u(x,t) -\big(  \int_0^t b(s) ds + \psi(x,t)\big)$   has a local maximum  (resp. minimum) point  in $\R^M \times (0,T)$ at $(x_0, t_0)$ then
$$
 \psi_t(x_0, t_0)+ G(t_0, x_0, D_x \psi(x_0,t_0) ) \leq 0 \quad  (resp. \geq 0) \: . 
$$
A locally bounded continuous function  $u$ is a {\em (Is-tm) solution}  of  \eqref{EqHJclassi}  if it is both a sub and a super-solution and  verifies $u(x,0)=u_0(x) $ in $\R$.
\end{defi}

With these two definitions in hand,  it is natural to define continuous  Flux Limited t-Measurable (FL-tm) solution as follows.  

\begin{defi} \label{defHmismod2} {\bf Flux Limited t-measurable (FL-tm)} \\
A continuous bounded  function $u : \R \times (0,T) \ds \R$ is a (FL-tm) subsolution (resp. supersolution) of \eqref{eqHJmod2} in $\R \times (0,T)$ if the following holds. 

Let   $b \in L^1(0,T)$, $\psi \in PC^1(\R \times (0,T))$ and $(x_0,t_0)$ be a local maximum (resp. minimum) point   in $\R \times (0,T)$ of 
$ \dis u(x,t) - \big( \int_0^t b(s) ds + \psi(x,t)\big)$.

{\bf Case 1:  If $ x_0 =0$ (on the junction).} Let  $G_0 \in C([0,T] \times \overline{\Omega_1} \times \overline{\Omega_2} \times \R^2)$ satisfies for some $\delta >0$, 
$$
\begin{array}{c}
 -b(t)+G_0(t,x_1,x_2,(p_1,p_2)) \leq \max\{ A(t), \Hu^+ (t,x_1,p_1), \Hd^-(t,x_2,p_2) \}       \quad    (resp. \geq )  \\
 \\
\mbox{ for $|p_1 - \psi_x(0^-,t_0) | \leq \delta$,  $|p_2 - \psi_x(0^+,t_0) | \leq  \delta$, $|x_1|,|x_2| \leq \delta$, and a.e.  $|t - t_0| \leq \delta$ } \\
 \end{array}
$$
then 
$$
 \begin{array}{c}
  \psi_t(0,t_0)+ G_0(t_0, 0,0, (\psi_x(0^-,t_0),\psi_x(0^+,t_0)) ) \leq 0  \quad    (resp. \geq  0) \: .  \\
 \end{array}
$$

{\bf Case 2:  If $ x_0 \in \Omega_i$ for $i=1,2$ (away from the junction).} Let $G_i \in C([0,T] \times \Omega_i   \times \R)$ satisfies for some $\delta >0$, 
$$
\begin{array}{c}
 -b(t)+G_i(t,x,p)  \leq \Hi(t,x,p) \quad    (resp. \geq )  \\
 \\
 \mbox{for $|p - \psi_x(x_0,t_0) | \leq \delta$,  $|x-x_0| \leq \delta$, and a.e.  $|t - t_0| \leq \delta$  }  \\
\end{array}
$$
then 
$$
 \begin{array}{c}
   \psi_t(x_0,t_0)+ G_i(t_0, x_0, \psi_x(x_0,t_0) ) \leq 0  \quad    (resp. \geq  0) \: .  \\
 \end{array}
$$

Moreover, a continuous bounded function $u$ is a {\em (FL-tm) solution}  of \eqref{eqHJmod2} if it is both a sub and a super-solution of \eqref{eqHJmod2} in $\R \times (0,T)$  and  verifies $u(x,0)=u_0(x)$ in $\R$.

\end{defi}

\begin{rem} \label{confaltre}
{\em 
It is clear that this definition generalizes both (Is-tm) and (FL-tc) definitions. Indeed, if there are not junctions, i.e. $H_1 \equiv H_2$, Definition \ref{defHmismod2} coincides  with  Definition \ref{defHMis2} while in the case of both $t$-continuous Hamiltonians and flux limiter, with $A(\cdot)\geq A_0(\cdot)$, we  recover Definition \ref{defHcont2}. }
\end{rem}

\begin{rem} \label{disdipinx}
{\em 
Comparing this definition with the condition introduced in the Introduction for the model problem  equation \eqref{eqHJmodIntro} we see that   when the test functions are touching at the junction we need to introduce the function $G_0$ that depends also on the space variables $ (x_1,x_2) \in  \overline{\Omega_1} \times \overline{\Omega_2}$. \\
This is linked to the  measurability $t$ dependence in Hamiltonians and not on the $x$ dependence as one can expect. Indeed, if one consider the case Hamiltonians depending only in space and gradient variable, i.e. $H_i=H_i(x,p)$,  it suffices to consider, in Definition  \ref{defHmismod2}, continuous $G_0=G_0(t,p_1,p_2)$ verifying  
$$
\begin{array}{c}
 -b(t)+G_0(t,(p_1,p_2)) \leq \max\{ A(t), \Hu^+ (0,p_1), \Hd^-(0,p_2) \}       \quad    (resp. \geq )  \\
 \end{array}
$$
for $|p_1 - \psi_x(0^-,t_0) | \leq \delta$,  $|p_2 - \psi_x(0^+,t_0) | \leq  \delta$ and a.e.  $|t - t_0| \leq \delta$.    }

\end{rem}
\section{On comparison results} \label{comp2} 

With the aim of giving a comparison result that can be easily extended to more general Hamiltonians and geometries,  we give a proof based on the same ideas as in  \cite{BrRam}  by the author and  F. Rampazzo (this approach  is inspired by the proof of the stability result  proved for (Is-tm)  solutions in the original work \cite[Proposition 7.1]{Is85}).

{\em   

As already done in the introduction, we first explain the key construction in the simple framework of the ODE: 
 \be \label{ODEM}
 u^\prime(t)+ H(t)=0  \mbox{ for } t \in (0,T) \mbox{ with }  H(t) \in L^1(0,T). 
 \ee
Suppose  there exists a sequence of continuous functions $H_n  \in C^0(0,T)$ such that $H_n \ds H$ in $L^1(0,T)$ and  therefore that comparison results are classically known for the corresponding ODE.

The key idea is that if  $u$ is a (Is-tm)-subsolution as in Definition \ref{defHMis2},  we can construct a sequence $(u_n)$ of classical viscosity subsolution of $u^\prime(t)+ H_n(t)=0$ that converges to $u$ pointwise,  by setting 
$$u_n(t):=u(t)-\int_0^t |H_n(s)-H(s)| \, ds. $$ 
Indeed,  for any test function  $\psi \in C^1(0,T)$ and  $t_0$ point of maximum of $u_n-\psi$ in $(0,T)$,  $t_0$ is also a maximum point of $u(t)- \big(\int_0^t |H_n(s)-H(s)| \, ds +\psi(t) \big)$.  Thus applying Definition \ref{defHMis2}  with $b(s)=|H_n(s)-H(s)| $ and $G(t)=H_n(t)$ we obtain that  $\psi^\prime(t_0)+ H_n(t_0) \leq 0$.  \\
With similar arguments, if $v$ is a (Is-tm)-supersolution as in Definition \ref{defHMis2},  we can construct  $$v_n(t):=v(t)+\int_0^t |H_n(s)-H(s)| \, ds$$ that is a  classical viscosity supersolution of $v^\prime(t)+ H_n(t)=0$ in $(0,T)$. \\
Since by construction $u_n \ds u$ and $v_n \ds v$,  it is clear that one can obtain comparison results for the $t$- measurable case once a comparison result is know for the continuous case.  }

Exploiting this  idea our result will be then based on one hand on  the corresponding known results in the cases of Flux Limited $t$-continuous solutions and on the other on the possibility to construct a {\it good} approximation of the Hamiltonians and the flux limiter $A$.  Roughly speaking, we will need to approximate by a sequences of  $t$-continuous functions converging in $L^1(0,T)$ somehow uniformly w.r.t. the other variables (see   \eqref{ipHnLOC}-\eqref{ipAn2LOC} in the proof of Theorem  \ref{compmod2LOC} and Remark \ref{gendim1}). As discussed in \cite[Section 4.1]{BrRam} a quite general assumption that guarantees such an approximation is the following: \\
{\em (AP)} For every compact subset  $Q \subset \R^2$  there exist $h \in L^\infty(0,T)$ and a  $t$-integrable modulus $\omega$  such that,  for  $i=1,2$, 
\be  \label{ipHam2inTmodulus} 
| H_i(t,x,p) - H_i(s,y,q)| \leq | h(s) - h(t)| + \omega(t, |x-y|+|p-q|) + \omega(s,|x-y|+|p-q|) \: 
\ee
for any $(x,p), (y,q) \in Q$ and $t,s \in [0,T]$. 

Our comparison result, Theorem \ref{compmod2LOC} below, is proved under the assumption of Section \ref{onedimconvex}. This is motivated by the aim of applying the corresponding comparison result in  \cite[Theorem 5.8]{ImMo1} for (FL-tc) solutions. Indeed, if we assume $A$ and  $H_i$ to be $t$-continuous,  
 \eqref{ipdatoiniziale2}-\eqref{ipA2} trivially imply (H0)-(H6) and (A0)-(A2) in  \cite[Section 5.3]{ImMo1}. Moreover,  one can verify that \eqref{ipdatoiniziale2}-\eqref{ipA2} also implies assumptions {(GA-Conv)-(GA-G-FL)}  in the more general comparison  result given in \cite[Theorem 14.2]{BaChas}, for $t$- continuous case Hamiltonians and Flux-limiter. 

We observe also that  away from the junction  $\{0 \}$ each Hamiltonian verifies assumption  \eqref{ipHamiIssGen}, therefore, without junctions,  we are in the framework of comparison results  given in \cite[Theorem 8.1]{Is85} or  \cite[Appendix]{LiPer87}.

 \begin{theo}  {\bf (Comparison result)}  \label{compmod2LOC} \\
Assume the Hamiltonians $H_i $ satisfies the regularity assumptions  \eqref{ipHami2reg}-\eqref{ipHam2inP}, 
 the convexity hypothesis \eqref{ipHami2Conv} and fulfill the approximability condition \eqref{ipHam2inTmodulus}. 
Let $A$ be a flux limiter verifying  \eqref{ipA02}-\eqref{ipA2}. \\
Let $u$ and $v$ be a (FL-tm)-subsolution and a (FL-tm)-supersolution of \eqref{eqHJmod2}, respectively.  
Let us assume that they are locally Lipschitz continuous in $x$ uniformly w.r.t. $t \in [0,T]$, and that  there exist $C_1,C_2 >0$ such that 
$$
u(x,t)  \leq  C_1(1 + |x|)  \:, \quad  v(x,t)  \geq  C_2(1 + |x|) \quad  \forall (x,t) \in \R \times (0,T) \: .
$$
If 
$$
u(x,0) \leq v(x,0)  \mbox{ for all }  x \in \R. 
$$
Then, 
$$
 u(x,t) \leq v(x,t) \mbox{ for all } (x,t) \in \R \times (0,T).
$$
\end{theo}
{\bf Proof.}  This proof is divided 3 steps: we start by localizing the proof,  we then construct the sequences of approximating sub and super solutions in a ball $B_R=(-R,R)$ and  we end by detailing how to exploit the  corresponding result in the continuous case to conclude. 

{\bf Step 1 (localization).} 
Set, for  $\eta,\alpha >0$ 
$$
w_{\alpha,\eta}(x,t):= \tilde{u}_{\alpha,\eta}(x,t)-v(x,t) \quad \mbox{ where  } \quad 
\tilde{u}_{\alpha,\eta}(x,t):= u(x,t)-\eta t- \frac{\alpha}{2} |x|^2   \:.
$$
Arguing similar to classical framework (see e.g. \cite[Section 2.2]{BaChas} or \cite[proof of Theorem 1.5]{ImMo1}),  we can prove that $\exists R=R(\alpha,\eta) >0 $ such that 
\be \label{DISfuoriBR}
  \vert x \vert \geq R \Longrightarrow  w_{\alpha,\eta}(x,t) < 0 \quad \forall t \in (0,T), 
\ee
and that  fix $\eta >0$ there exists a small enough $\alpha= \alpha(\eta) >0$ such that  $\tilde{u}_{\alpha,\eta}$ is a  (FL-tm)-subsolution of \eqref{eqHJmod2}, for any  $ \alpha \leq \alpha(\eta) $. Thus, we have 
\be  \label{locUV}
 \tilde{u}_{\alpha,\eta}(x,t) <  v(x,t) \: \mbox{ on } \partial B_R \times (0,T)  \quad \mbox{ for } \eta >0 \:, 0< \alpha \leq  \alpha(\eta)  \mbox{ and } R=R(\alpha(\eta), \eta) .
 \ee
Moreover,  by construction,  $\tilde{u}_{\alpha,\eta}(x,0) < v(x,0)$ in $\R$.

{\bf Step 2 (construction of $u_n$ and $v_n$).} 
Fix, now $R$ as in \eqref{locUV} and  $K=\max\{L_u,L_v\}$, with $L_u, L_v$  being  the Lipschitz constants of $u$ and $v$  in $B_R$, respectively.  Thanks to assumption \eqref{ipHam2inTmodulus}, there exist $(\Hin)_{n \in \N} \in C^0([0,T] \times \overline{\Omega}_i \times \R ; \R)$, $i=1,2$  such that 
\be \label{ipHnLOC} 
\begin{array}{l}
\dis  \lim_{n \ds \infty}  \int_0^T  \sup_{ x \in  \overline{B_{R,i}} ,  \/ |p| \leq K}  | \Hin(t,x,p)- H_i(t,x,p)| dt =0
 \end{array} 
\ee
 where we set $B_{R,i}=B_R \cap \Omega_i$.
Moreover, by hypothesis \eqref{ipA2} there exists  a sequence of functions $(A_n)_{n \in \N}$,  such that 
\be \label{ipAn2LOC} 
\begin{array}{l}
\dis  A_n \in C^0([0,T]; \R),   \quad A_n \rightarrow A \mbox{ in }  L^1(0,T). \\ 
 \end{array} 
\ee
Let
\be \label{defknLOC}
k_n(t):= \max \{ k^0_n(t), k^1_n(t),k^2_n(t) \}
 \ee
where
 \be \label{defknoudLOC}
 \begin{array}{l}
\dis k^0_n(t):= \sup_{|p_1| ,|p_2| \leq K} \left| \max\{ A(t), \Hu^+ (t,0,p_1), \Hd^-(t,0,p_2) \} -   \max\{ A_n(t), \Hun^+ (t,0,p_1), \Hdn^-(t,0,p_2) \} \right|  \\
\\
\dis k^i_n(t):= \sup_{ x \in  \overline{B_{R,i}}, \: |p| \leq K} \left| \Hi (t,x,p)  -  \Hin (t,x,p) \right|  \: ,\quad i=1,2
\end{array}
\ee
 and define 
$$
 u_n(x,t):=\tilde{u}_{\alpha,\eta}(x,t)-\int_0^t k_n(s) \: ds  \mbox{ and }  v_n(x,t):= v(x,t)+\int_0^t k_n(s) \: ds.
$$
Key points of this construction are:  $k_n$ is a sequence of $L^1$-functions such that  $k_n \rightarrow 0 $ in $L^1(0,T)$ and,  for any test function $\psi \in PC^1(\R \times (0,T))$ touching $u_n$ or $v_n$ from above or from below at  $(0,t)$ (resp. $(x,t)$, $x \in   \overline{B_{R}}$) one has $\max  \{ |\psi_x(0^+,t)|, |\psi_x(0^-,t)| \} \leq K$, (resp. $|\psi_x(x,t)|  \leq K$).

{\bf Step 2-(i).}  For every $n \in \N$,  $u_n$ is a  (FL-tc) subsolution of \eqref{eqHJmod2}$_n$ in  $B_{R} \times (0,T)$. (We will refer to equation \eqref{eqHJmod2}$_n$ when we will consider equation \eqref{eqHJmod2} with $A_n$, $\Hin$  instead of $A$, $H_i$, respectively.)

Let $\psi \in PC^1(\R \times (0,T))$ and   $(x,t)$ be a  local maximum point of $u_n - \psi$ in $B_{R}  \times (0,T)$. 
By construction $(x,t)$ is a  local maximum point of 
$$
\tilde{u}_{\alpha,\eta}(x,t)-( \int_0^t k_n(s)) \: ds+  \psi(x,t) ) \mbox{ with } k_n \in L^1(0,T).
$$

Let us consider two cases depending on  $x$ being on the junction or not. 

If $x \in \Omega_i$,  we set  $G_n(t,x,p) := \Hin(t,x,p)$.
Thus $G_n \in C([0,T] \times \overline{\Omega}_i \times \R)$ and 
$$
-k_n(t)+G_n(t,x, p) \leq \Hi(t,x,p) \quad \mbox{ for any }  x \in  \overline{B_{R,i}} \:,  |p|  \leq  K \: .
$$
Since $\tilde{u}_{\alpha,\eta}$ is a (FL-tm) subsolution of \eqref{eqHJmod2}  and $|\psi_x(x,t)| \leq  K$, Definition  \ref{defHmismod2} applies and gives 
$$
 \psi_t(x,t)+ G_n(t,x, \psi_x(x,t))  \leq 0,
$$
that reads 
$$
  \psi_t(x,t)+ \Hin(t,x,\psi_x(x,t)) \leq 0  
$$
thus Definition  \ref{defHcont2} is fulfilled. 

If $x=0$  setting $G_n(t,x_1,x_2, (p_1,p_2)):= \max\{ A_n(t), \Hun^+ (t,x_1,p_1), \Hdn^-(t,x_2,p_2) \}$, one has  \\ $G_n \in C([0,T] \times \overline{\Omega_i} \times \overline{\Omega_i}   \times \R^2)$,   and for any  $|p_1|,|p_2|  \leq  K$,  $t \in (0,T)$ 
$$
-k_n(t)+G_n(t,0,0, (p_1,p_2)) \leq \max\{ A(t), \Hu^+ (t,0,p_1), \Hd^-(t,0,p_2) \}.
$$
Since $\tilde{u}_{\alpha,\eta}$ is a (FL-tm) subsolution of \eqref{eqHJmod2} and $|\psi_x(0^+,t)|, |\psi_x(0^-,t)| \leq  K$, Definition  \ref{defHmismod2} applies and gives 
$$
 \psi_t(0,t)+ G_n(t,0,0, (\psi_x(0^-,t),\psi_x(0^+,t)))  \leq 0, 
$$
that reads 
$$
 \psi_t(0,t)+\max\{ A_n(t), \Hun^- (t,0,\psi_x(0^+,t) ), \Hdn^+(t,0,\psi_x(0^-,t)) \} \leq 0
$$
and conclude Step 2-(i).

{\bf Step  2-(ii)} For each $n$ fixed,  $v_n$ is a  (FL-tc) supersolution of \eqref{eqHJmod2}$_n$ in $B_{R} \times (0,T)$.

Let $\psi \in PC^1(\R \times (0,T))$ and   $(x,t)$ be a  local minimum point of $v_n - \psi$ in $\R \times (0,T)$. 

By construction $(x,t)$ is a  local minimum point of 
$$
v(x,t)-  \Big( \int_0^t (-k_n(s)) \: ds +  \psi(x,t) \Big) \mbox{ with } k_n \in L^1(0,T) .
$$
Let us consider two cases depending on  $x$ being on the junction or not.

If $x \in \Omega_i$,  we set  $G_n(t,x,p) := \Hin(t,x,p)$.
Thus $G_n \in C([0,T] \times \overline{\Omega}_i \times \R)$ and 
$$
-k_n(t)+G_n(t,x, p) \geq \Hi(t,x,p) \quad \mbox{ for any }  x \in  \overline{B_{R,i}} \:,  |p|  \leq  K \: .
$$
Since $v$ is a (FL-tm) supersolution of \eqref{eqHJmod2} and $|\psi_x(x,t)| \leq  K$, Definition  \ref{defHmismod2} applies and gives
$$
 \psi_t(x,t)+ G_n(t,x, \psi_x(x,t))  \geq 0,
$$
that reads 
$$
  \psi_t(x,t)+ \Hin(t,x,\psi_x(x,t)) \geq 0  
$$
thus Definition  \ref{defHcont2} is fulfilled. 

If $x=0$  setting $G_n(t,x_1,x_2, (p_1,p_2)):= \max\{ A_n(t), \Hun^+ (t,x_1,p_1), \Hdn^-(t,x_2,p_2) \}$, one has  \\ $G_n \in C([0,T] \times \overline{\Omega_i} \times \overline{\Omega_i}   \times \R^2)$,   and for any  $|p_1|,|p_2|  \leq  K$,  $t \in (0,T)$ 
$$
k_n(t)+G_n(t,0,0, (p_1,p_2)) \geq \max\{ A(t), \Hu^+ (t,0,p_1), \Hd^-(t,0,p_2) \} . 
$$
Since $v$ is a (FL-tm) subsolution of \eqref{eqHJmod2} and $|\psi_x(0^+,t)|, |\psi_x(0^-,t)| \leq  K$, Definition  \ref{defHmismod2} applies and gives 
$$
 \psi_t(0,t)+ G_n(t,0,0, (\psi_x(0^-,t),\psi_x(0^+,t)))  \geq 0, 
$$
that reads 
$$
 \psi_t(0,t)+\max\{ A_n(t), \Hun^- (t,0,\psi_x(0^+,t) ), \Hdn^+(t,0,\psi_x(0^-,t)) \} \geq 0
$$
and conclude Step 2. 

{\bf Conclusion.}  
By construction, we have 
$$
u_n(x,0)= \tilde{u}_{\alpha,\eta}(x,0) \leq u(x,0) \leq v(x,0)=v_n(x,0) \quad \forall x \in \R \:. 
$$
and,  
\be  \label{limconst}
\lim_{n \ds \infty} \sup_{(x,t) \in \R \times (0,T)} |  u_n(x,t)-\tilde{u}_{\alpha,\eta} (x,t)  |=0 \quad \mbox{and} \quad \lim_{n \ds \infty} \sup_{(x,t) \in \R \times (0,T)} |  v_n(x,t)-v(x,t)  |=0  \: .
\ee
Therefore,  by \eqref{locUV} in Step 1, for $n$ big enough 
$$
 u_n(x,t) \leq   v_n(x,t) \: \mbox{ on } \partial B_R \times (0,T)   \: .
$$
Moreover,  \eqref{ipHami2reg}-\eqref{ipA2}  imply (H0)-(H6) and (A0)-(A2) in  \cite{ImMo1} (or assumptions {(GA-Conv)-(GA-G-FL)})  in \cite{BaChas}, thus  the comparison results for (FL-tc) solutions  \cite[Theorem 5.8]{ImMo1} (or \cite[Theorem 14.2]{BaChas}) apply and  we have 
$$
u_n(x,t) \leq v_n(x,t) \mbox{ for all }  (x,t) \in B_R \times (0,T).
$$
Letting $n$ tends to infinity, and recalling  \eqref{DISfuoriBR}   we obtain
$$
 \tilde{u}_{\alpha,\eta}(x,t) \leq   v(x,t) \: \mbox{ in }  \R \times (0,T)  \: ,
$$
conclusion will then follows letting $\alpha$, and then $\eta$, tending to zero. 
 \hfill $\Box$

\begin{rem} \label{easycase}
{\em
Let us point out here that the construction of the  approximating sequence $(\Hin)$ is not needed when dealing with the model problem, equation \eqref{eqHJmodIntro}. 
Indeed, the same argument as above works if one choses $k_n(t):=k^0_n(t)$ in the construction of $u_n$ and $v_n$, (with  $k^0_n(t)$ as in  \eqref{defknoudLOC} and  $(A_n)_{n \in \N}$ as in  \eqref{ipAn2LOC}),  and take 
$$
G_n(t,p_1,p_2):=\max\{ A_n(t), \Hu^+(p_1), \Hd^{-}(p_2) \}
$$ 
when touching at the junction in the proof of $u_n$ and $v_n$ being (FL-tc) sub and supersolution, respectively. \\
Of course the same remark holds if $H_i=H_i(t,x,p)$ are assumed to be continuous in time.
  }
\end{rem}

\begin{rem}  \label{gendim1}
{\em Also with the aim to discuss as these results could be adapted to more general framework, we observe here that we could have stated the comparison principle under less general assumptions  but allowing a global proof, i.e. without Step 1.  
The idea is to be in a situation where the approximating sequences $u_n$ and $v_n$ are, respectively,  sub and super (FL-tc)-solutions of   \eqref{eqHJmod2}$_n$  in all $\R \times (0,T)$. For instance,  one can  assume one of  the following conditions.  
\begin{itemize}
 \item[(a)] Let $u$ and $v$ be a (FL-tm)-subsolution and a (FL-tm)-supersolution of \eqref{eqHJmod2}, respectively.  Let us assume that they are {\it globally Lipschitz continuous} in $x$ uniformly w.r.t. $t \in [0,T]$. \\
Moreover,  there exists a sequence of continuous Hamiltonians 
$(\Hin)_{n \in \N} \in C^0([0,T] \times \Omega_i \times \R ; \R)$, $i=1,2$  such that 
\be \label{ipHn} 
\begin{array}{l}
\dis  \lim_{n \ds \infty}  \int_0^T  \sup_{x \in \R ,  |p| \leq K}  | \Hin(t,x,p)- H_i(t,x,p)| dt =0.
 \end{array} 
\ee
 \item[(b)] Let $u$ and $v$ be a (FL-tm)-subsolution and a (FL-tm)-supersolution of \eqref{eqHJmod2}, respectively.  Let us assume that they are  bounded and continuous  in  $\R \times   (0,T)$. \\ 
Moreover,  there exists a sequence of continuous Hamiltonians $(\Hin)_{n \in \N} \in C^0([0,T] \times \Omega_i \times \R ; \R)$, $i=1,2$,  such that 
\be \label{ipHnGLOB} 
  \lim_{n \ds \infty}  \int_0^T  \sup_{x \in \R, p \in \R}  | \Hin(t,x,p)- H_i(t,x,p)| dt =0.
\ee
\end{itemize}
Indeed, in both cases  the suprema  in  \eqref{defknoudLOC}  to define $k_n$ in \eqref{defknLOC}, can be taken independently of $R$ and/or $K$ and the proof can be done globally.
}
\end{rem}


\section{An existence result  via optimal control problems} \label{submodoc2} 

The aim of this section is to prove an existence result considering an optimal control problem and  proving that the corresponding value function is a (FL-tm) solution  of equation \eqref{eqHJmod2} as in  Definition  \ref{defHmismod2}. The corresponding construction in the $t$-continuous case is described in  \cite[Section 14.4]{BaChas}, (see also \cite{BBCI}).

 Fix $T > 0$. For $i=0,1,2$,  let $\A_i$ be compact subsets of $\R$. We set $\A:=\A_0 \times \A_1 \times \A_2$ and consider as controls the triples $\alp= (\al0, \alu, \ald) \in L^\infty ([0,T] ; \A)$.  
 
Dynamics $\bi :[0,T] \times \R \times \A_i \ds \R$, $i=1,2$,  satisfies: 
\be \label{ipRC2Regdin}
\bi(t,x,\alpha_i) \mbox{ are  bounded, measurable in } t   \mbox{ and continuous in } (x, \alpha_i) 
\ee
and, for each $R >0$, there exists $C=C(R) >0$ such that 
\be \label{pRC2Regdin2}
 | \bi(t,x,\alpha_i)- \bi(t,y, \alpha_i) | \leq C(R) |x-y| \quad \forall |x|,|y| \leq R, \: t \in [0,T], \: \alpha_i \in  \A_i \: .
\ee

Running costs $\li :[0,T] \times \R \times \A_i \ds \R$ , $i=1,2$,  satisfies: 
\be \label{ipRC2Regrcos}
\li(t,x,\alpha_i) \mbox{ are  bounded,  measurable in  } t   \mbox{ continuous in } (x, \alpha_i) 
\ee
and, for each $R>0$ there exists a $t$-measurable modulus such that $\omega_R$ such that 
\be \label{ipRCreg}
| \li(t,x,\alpha_i) -l_i(t,y,\alpha_i) | \leq \omega_R (t,|x-y|) \quad  \forall |x|,|y| \leq R, \: t \in [0,T], \: \alpha_i \in  \A_i  \: . 
\ee

Moreover, for each $(t,x) \in [0,T] \times \R^N$, the sets 
\be \label{ipRC2}
 \{ ( \bi(t,x,\alpha_i), l_i(t,x,\alpha_i) ) \: : \: \alpha_i \in  \A_i \} \mbox{  are compact and convex }
\ee
and we  assume normal controllability in $\{0\}$:    for some $\delta >0$  independent of $(x,t)$  the sets 
\be  \label{IpControl22}
\{  \bu(t,x,\alpha_1)  \: : \: \alpha_1 \in  \A_1 \}  \mbox{ and } \{ \bd(t,x,\alpha_2)  \: : \: \alpha_2 \in  \A_2 \} \mbox{ contains }   [ -\delta, \delta] 
\ee
and, in order to allow admissible trajectories to stay in $\{0\}$, we ask that 
\be \label{ipdinamicain0}
  0 \in \A_0  \: .
\ee

{\it Admissible trajectories.} An admissible trajectory $\gamma$ that goes from $\ga0$ at time $0$ to $x$ at time $t$  is such that  there exists  a global control $\alp= (\alu, \ald, \al0) \in L^\infty ([0,t] ; \A)$,  and  a partition of $(0,t)$, $I=(\Iu, \Id, \I0) $ of measurable sets such that $\gamma(s) \in \Omega_i$ if $s \in I_i$, $i=1,2$, $\gamma(s)=0$ if $s \in I_0$ and 
 \be \label{deftra2}
 \dot{\gamma}(s)= \sum_{i=1,2} \bi(s,\gamma(s),\ali(s)) \mathbf{I}_{\Ii} (s) + \al0(s)  \mathbf{I}_{I_0}(s) \mbox{ for almost every } s \in (0,t).
\ee
Note that by Stampacchia's theorem (see for instance \cite{GiTr}) $\dot{\gamma}(s)=0$ for almost every $s \in I_0$, therefore \eqref{deftra2} reads 
 $$
 \dot{\gamma}(s)= \sum_{i=1,2}  \bi(s,\gamma(s),\ali(s))  \mathbf{I}_{\Ii} (s)  \mbox{ for almost every } s \in (0,t).
$$
We will denote the set of admissible trajectories that goes from $\gamma_0$ at time $0$ to  $x$ at time $t$ by
$$
{\T}_{0,\gamma_0}^{t,x}:= \apg (\gamma(\cdot), \alpha(\cdot)) \in \mbox{Lip}([0,t];\R) \times L^\infty((0,t); \A) : \eqref{deftra2}  \mbox{ holds, } \:  \gamma(0)=\gamma_0, \gamma(t)=x \: \chg  .
$$

{\it The cost function.} Let $l_0 \in L^\infty([0,T])$  and $A_0 \in \R$ we set 
\be \label{ipA0con}
A(t):=\max(-l_0(t),A_0) \in L^\infty([0,T]; [A_0,|A_0|+\parallel l_0 \parallel_\infty] ).
\ee

Let $u_0 : \R \ds \R$, for any $(\gamma,\alpha) \in {\T}_{0,\gamma_0}^{t,x}$ we consider the cost 
$$
J((x,t); (\gamma,\alpha)):=u_0(\gamma_0)+ \int_0^t( l_1(s,\gamma(s),\alu(s)) \mathbf{I}_{\Iu} (s) +l_2(s,\gamma(s),\alu(s)) \mathbf{I}_{\Id} (s) -A(s) \mathbf{I}_{I_0} (s)) \: ds \: .
$$

\begin{defi} \label{VF}  {\em 
Let $u_0 : \R \ds \R$ be bounded and  Lipschitz continuous. For $(x,t) \in \R \times [0,T]$,  the {\em value function} is 
$$
u(x,t) : = \inf_{\gamma_0 \in \R} \:  \inf_{\gamma \in  {\T}_{0,\gamma_0}^{t,x}}   J((x,t) ; (\gamma, \alpha))=  \inf_{\gamma_0 \in \R} \:  \inf_{\gamma \in  {\T}_{0,\gamma_0}^{t,x}}  
\apg u_0(\gamma_0)+ \int_0^t L(\tau,\gamma(\tau),\alpha(\tau)) \: d \tau \chg
 $$
 where we set 
 \[
L(t,x,\alp):= \left\{
\begin{array}{lcl}
l_1(t,x,\alu(t)) & \mbox{if} & x \in  \Omega_1    \\
l_2(t,x,\ald(t)) & \mbox{if}  &  x \in \Omega_2  \\
-A(t) & \mbox{ if}  &  x=0 \:. \\
\end{array}
\right.
\]}
\end{defi}

It is easily seen that  a standard {\em Dynamic Programming Principle (DPP)}   holds: 
for each  $(x,t) \in \R \times [0,T]$, $0 < s < t$, 
\be \label{DPP}
u(x,t)=   \inf_{z \in \R} \: \inf_{(\gamma, \alpha) \in  {\T}_{s,z}^{t,x}}  \apg  u(z,s) + \int_s ^t L(\tau,\gamma(\tau),\alpha(\tau)) \: d \tau \chg \: .
\ee

With similar arguments as in the continuous case we can derive from the DPP a regularity result for the value function. Precisely:

\begin{pro}   \label{regvalOnedim}  
Assume \eqref{ipRC2Regdin}, \eqref{pRC2Regdin2}, \eqref{ipRC2Regrcos}, \eqref{ipRCreg}, \eqref{ipRC2}, \eqref{IpControl22}, \eqref{ipdinamicain0} and \eqref{ipA0con}. \\
Then, the value function in Definition  \ref{VF}  is a bounded Lipschitz continuous function in $\R \times (0,T)$.
\end{pro}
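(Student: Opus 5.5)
Boundedness comes straight from the definition and the DPP \eqref{DPP}, and the Lipschitz bounds come from comparing trajectories; controllability \eqref{IpControl22} is what lets us build the connecting paths.

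\textbf{Boundedness.} Let $M$ bound $|l_1|,|l_2|,|A|$; this is possible by \eqref{ipRC2Regrcos} and \eqref{ipA0con}. For any admissible pair, $|J|\le \|u_0\|_\infty + Mt$. Hence $|u(x,t)|\le \|u_0\|_\infty+MT$, provided ${\T}_{0,\gamma_0}^{t,x}$ is nonempty for some $\gamma_0$. Nonemptiness holds by taking $\gamma_0=x$ and a trajectory that stays at $x$. If $x\in\Omega_i$, choose $\alpha_i(s)$ with $f_i(s,x,\alpha_i(s))=0$, which exists by \eqref{IpControl22} together with a measurable selection. If $x=0$, use $I_0=(0,t)$, which is allowed by \eqref{ipdinamicain0}.

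\textbf{Lipschitz in $x$.} Fix $t$ and $x,y$ with $|x-y|$ small; the large-distance case is trivial from boundedness. The plan is to build a short connecting trajectory from $y$ to $x$ in time $\tau=|x-y|/\delta$, using speeds $\pm\delta$ from \eqref{IpControl22}.
\begin{itemize}
\item If $x,y$ lie in the same closed half-line, move at constant speed $\delta$ toward $x$.
\item If they lie on opposite sides, pass through $0$. The crossing point is harmless: it is a single time, so it belongs to $I_0$ only on a null set.
\end{itemize}
The inward velocity is selected measurably in $s$ via Filippov's lemma. Because the dynamics are $t$-measurable, we take $\dot\gamma(s)=\pm\delta$ exactly: the set $\{\alpha: f_i(s,\gamma(s),\alpha)=\pm\delta\}$ is nonempty and closed, and measurable selection gives $\alpha_i(s)$.

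Now concatenate. Take an $\varepsilon$-optimal pair $(\gamma,\alpha)$ for $u(y,t-\tau)$ and append the connection on $[t-\tau,t]$. This gives
$$u(x,t)\le u(y,t-\tau)+M\tau+\varepsilon.$$
Combined with the time estimate below, $u(x,t)\le u(y,t)+C|x-y|$. Symmetry gives the reverse inequality. For $t<\tau$, use $u_0$ directly: start from $\gamma_0=y'$ near $x$, and use that $u_0$ is Lipschitz.

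\textbf{Lipschitz in $t$.} For $s<t$ the DPP gives, by staying at $x$ during $[s,t]$ as in the boundedness step,
$$u(x,t)\le u(x,s)+M(t-s).$$
For the reverse inequality, take an $\varepsilon$-optimal pair for $u(x,t)$ given through the DPP at time $s$, with endpoint $z=\gamma(s)$. Since the dynamics are bounded, $|z-x|\le \|f\|_\infty (t-s)$, so
$$u(x,t)\ge u(z,s)-M(t-s)-\varepsilon\ge u(x,s)-C(t-s)-\varepsilon,$$
where the last step uses the $x$-Lipschitz bound at time $s$. The $x$-bound itself used the weak time inequality $u(y,t-\tau)\le u(y,t)+C\tau$.

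To avoid circularity, first prove the $x$-Lipschitz bound without any time estimate by a second route. Run the $\varepsilon$-optimal trajectory for $u(y,t)$ on $[0,t-\tau]$. Then replace its final piece on $[t-\tau,t]$, which goes from $\gamma(t-\tau)$ to $y$, by a direct path to $x$. The distance to cover is at most $(\|f\|_\infty+\delta)\tau$, which needs time comparable to $\tau$; if necessary, rescale $\tau$ by a constant. This yields
$$u(x,t)\le u(y,t)+2M\tau'+\varepsilon$$
with $\tau'\le C|x-y|$, which is the $x$-bound.

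The main obstacle is this $x$-step near the junction with $t$-measurable data. It requires building admissible controls with prescribed velocities, via measurable selection, and handling trajectories that pass through or sit at $0$ consistently with the partition $(I_0,I_1,I_2)$.
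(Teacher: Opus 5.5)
The gap is your second route: it does not break the circularity, and under the stated hypotheses nothing can, because the statement is false as written.

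Your boundedness step is fine. So is your derivation of $u(x,s)\le u(x,t)+C(t-s)$ ($s<t$) from an $x$-Lipschitz bound via the DPP. Your first route for the $x$-estimate is exactly the paper's argument, and you are right that it is circular. Staying put only gives $u(x,t)\le u(x,s)+C(t-s)$. The splice instead needs $u(y,t-\tau)\le u(y,t)+C\tau$, which is the \emph{hard} direction, not the ``weak'' one as you call it. The paper simply asserts the two-sided time bound ``since controllability allows constant trajectories'', so it glosses over precisely this point.

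Your second route fails for the following reason. Suppose the $\varepsilon$-optimal $\gamma$ for $(y,t)$ approaches $y$ at speed $\|f\|_\infty$ from the side opposite to $x$. Then $|\gamma(t-\tau')-x|=\|f\|_\infty\tau'+|x-y|$, while only speed $\delta\le\|f\|_\infty$ is guaranteed. The replacement path therefore needs time at least $(\|f\|_\infty/\delta)\tau'+|x-y|/\delta>\tau'$ for every $\tau'$, so no rescaling helps.

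Nor can any splicing argument work, since these use only boundedness of $l_i$. When $l_i$ has only a modulus in $x$, the proposition fails. Take $f_1=f_2=\alpha$, $\A_1=\A_2=[-1,1]$, $l_1(t,x,\alpha)=\min\{1,|x-t-1|^{1/2}\}$, $l_2\equiv 0$, $u_0\equiv 0$; all the assumptions hold, with $\delta=1$. Any admissible $\gamma$ reaching $t+1+h$ at time $t$ satisfies $\gamma(s)-s\ge 1+h$ on $[0,t]$, because $\dot\gamma\le 1$. So it stays in $\Omega_1$ and pays at least $\sqrt h$ per unit time. Hence, for $0<h\le 1$ and $t+h<T$, you get $u(t+1+h,t)=t\sqrt h$, while $u(t+1,t)=u(t+1+h,t+h)=0$. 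Thus $u$ is Lipschitz neither in $x$ nor in $t$.

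A correct version needs an additional assumption, such as a Lipschitz bound on $l_i$ in $x$. That bound should be exploited through a same-control (Gronwall) comparison with a trajectory ending at $x$, not through splicing, with junction crossings handled separately via normal controllability.
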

{\bf Proof.}  
One easily has that  the value function is bounded, i.e. 
$$
\vert u(x,t)   \vert  \leq (2L+\bar{A}) T + \parallel u_0 \parallel_\infty   \mbox{ for all }  (x,t) \in \R \times (0,T) \:,
$$
where $\dis L:=\max(   \parallel  l_1 \parallel_\infty, \parallel l_2 \parallel_\infty )$ and $\bar{A}:=|A_0|+\parallel l_0 \parallel_\infty $.

Moreover, since controllability assumptions allow  constant trajectories, by DPP  
$$
\vert u(x,t) - u(x,s) \vert \leq  (2L+\bar{A}) \vert t-s \vert \mbox{ for all }  x \in \R \: ,  t,s \in  (0,T).
$$

Let us fix now any $x,y \in \R, x \not = y$, and let $\eta=:=\frac{y-x}{\vert y-x \vert} \delta$, with $\delta >0$ given in  \eqref{IpControl22}. \\ For $t  \in [0,T]$ we  take $s:= t- \frac{\vert y-x \vert}{\delta} < t$ and we  aim to construct an admissible trajectory that goes  from $y$ at time $s$ to $x$ at time $t$. If $x,y$ are both in the same $\Omega_i$ this is standard.  If this is not the case,  the construction is possible thanks to the choice of $\eta$, the normal controllability assumption \eqref{IpControl22} and a measurable selection theorem (e.g. Corollary of Proposition 1 in \cite{Roc}). \\
Therefore, using this trajectory in the DPP  we have:
$$
u(x,t)-u(y,t) \leq  u(x,t) - u(y,s)+u(y,s)-u(y,t) \leq \int_s^t L(\tau, \gamma(\tau), a) d\tau +u(y,s)-u(y,t) 
$$
thus 
$$
u(x,t)-u(y,t) \leq (2L+\bar{A}) \vert t-s \vert + (2L+\bar{A}) \vert t-s \vert = 2(2L+\bar{A}) \frac{\vert y-x \vert}{\delta}
$$
inverting the roles of $x$ and $y$ one can conclude that $u$ is Lipschitz continuous. 
\hfill $\Box$

\

We define the following  {\em Hamiltonians} 
\be \label{defH}
H_i(t,x,p):= \sup_{ \ali \in \A_i} \apg \bi(t,x,\alpha_i)  p - l_i(t,x,\ali)   \chg \mbox{ for } i=1,2 \: . 
\ee
Standard arguments ensures that both $H_i$ fulfill regularity conditions \eqref{ipHami2reg}-\eqref{ipHam2inP} and  are convex and coercive in $p$.  Moreover, also thanks to normal controllability,   their nondecreasing and nonincreasing part  can be characterized in terms of subset of controls.  The precise result is the following (we will omit the proof because the $t$- measurability plays not real role here, thus this is analogous to the continuous case, see e.g. \cite[Appendix]{BBCI},  \cite[Lemma 6.2]{ImMo1} or \cite[Section 14.4]{BaChas}). 

\begin{lem}  \label{lempropH}
Assume  \eqref{ipRC2Regdin}-\eqref{IpControl22}.
Then,  the Hamiltonians $H_i$ in \eqref{defH} fulfill  \eqref{ipHami2reg}-\eqref{ipHami2Conv}. \\
Moreover, the nondecreasing and nonincreasing part of $H_i$ are given by 
\begin{eqnarray*}
H_i^-(t,x,p)= \sup_{ \ali \in \A_i ,  \bi(t,x,\alpha_i) \leq 0} \apg \bi(t,x,\alpha_i) p - l_i(t,x,\ali)   \chg   \\
=\sup_{ \ali \in \A_i ,  \bi(t,x,\alpha_i) < 0} \apg \bi(t,x,\alpha_i) p - l_i(t,x,\ali)   \chg  
\end{eqnarray*}
and  
\begin{eqnarray*}
 \quad H_i^+(t,x,p)= \sup_{ \ali \in \A_i ,  \bi(t,x,\alpha_i)  \geq 0} \apg  \bi(t,x,\alpha_i) p - l_i(t,x,\ali)   \chg \\
 = \sup_{ \ali \in \A_i ,  \bi(t,x,\alpha_i)  > 0} \apg  \bi(t,x,\alpha_i) p - l_i(t,x,\ali)   \chg. 
\end{eqnarray*}
\end{lem}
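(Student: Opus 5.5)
The statement to prove is Lemma \ref{lempropH}. My plan follows the $t$-continuous argument pointwise in $t$. The only new point is measurability in $t$. Throughout, fix $t$ and $x$.

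\textbf{Regularity and convexity.} $H_i(t,x,\cdot)$ is a supremum of affine functions of $p$, so it is convex. Its slopes are bounded by $M:=\|f_i\|_\infty$, which gives \eqref{ipHam2inP} with $C_2=M$. Normal controllability \eqref{IpControl22} gives
$$H_i(t,x,p)\ge \delta|p|-\|l_i\|_\infty,$$
which is coercivity uniformly in $(t,x)$, so \eqref{ipHami2Conv} holds. Estimate \eqref{ipHam2inX} follows from
$$|f_i(t,x,\alpha_i)p-f_i(t,y,\alpha_i)p|\le C(R)|x-y||p|$$
and from \eqref{ipRCreg}, after taking suprema. Continuity in $(x,p)$ holds because $\A_i$ is compact and the data are continuous in $(x,\alpha_i)$. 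Since $H_i(t,0,0)$ is bounded, \eqref{ipHami2int} holds.

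For measurability in $t$, take a countable dense set $D\subset\A_i$. The supremum over $\A_i$ equals the supremum over $D$ by continuity in $\alpha_i$. A countable supremum of measurable functions is measurable, so $H_i$ is measurable in $t$.

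\textbf{Monotone parts.} Set
$$H^{\le}(p):=\sup_{f_i(t,x,\alpha_i)\le 0}\{f_i p-l_i\},\qquad H^{\ge}(p):=\sup_{f_i(t,x,\alpha_i)\ge 0}\{f_i p-l_i\}.$$
Both sets of controls are nonempty by \eqref{IpControl22}. $H^{\le}$ is nonincreasing and $H^{\ge}$ is nondecreasing, and $H_i=\max(H^{\le},H^{\ge})$. It is convenient to use the convex set
$$\mathcal{K}=\{(f_i,l_i)\}$$
from \eqref{ipRC2}. Then $H_i(p)=\sup_{(f,l)\in\mathcal{K}}(fp-l)$, and
$$\min_p H_i=-\min\{l:(0,l)\in\mathcal{K}\}.$$
This identity is the standard duality $\min_p\sup_{\mathcal K}(fp-l)=\sup_{\mathcal K}\min_p(fp-l)$, justified by a minimax theorem using the compactness and convexity of $\mathcal{K}$ and the fact that $0$ lies in the interior of the projection of $\mathcal{K}$ onto the $f$-axis. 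In particular $\min_p H_i$ is attained by some element with $f=0$.

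Consider $p\le\hat p$. Any $(f,l)$ with $f\ge0$ satisfies
$$fp-l\le f\hat p-l\le H_i(\hat p)=\min H_i\le H^{\le}(p),$$
where the last inequality uses the element $(0,l)$ above, which belongs to the $f\le0$ family. Hence $H_i(p)=H^{\le}(p)$ for $p\le\hat p$. For $p\ge\hat p$ we have $H^{\le}(p)\le H^{\le}(\hat p)\le\min H_i$, and $H^{\le}(p)\ge\min H_i$ again via $(0,l)$. Therefore $H^{\le}=H_i^-$. The argument for $H^{\ge}=H_i^+$ is symmetric.

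\textbf{Strict inequalities.} Take $(0,l_0)\in\mathcal{K}$ and some $(-\delta,l')\in\mathcal{K}$, which exists by \eqref{IpControl22}. By convexity, $(-\epsilon\delta,(1-\epsilon)l_0+\epsilon l')\in\mathcal{K}$ for $\epsilon\in(0,1]$. As $\epsilon\to0$ these points approximate the $f=0$ element and have $f<0$. So the supremum over $f\le0$ equals the supremum over $f<0$, and likewise for $f\ge0$ versus $f>0$.

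\textbf{Main obstacle.} The delicate step is the duality identity $\min_p H_i=-\min\{l:(0,l)\in\mathcal K\}$, that is, showing that the minimum value is realized by a zero-velocity element. I will prove it by separation: let $m:=\min\{l:(0,l)\in\mathcal K\}$. For $\epsilon>0$ the point $(0,m-\epsilon)$ lies outside the compact convex set $\mathcal{K}$, so some line separates them. This line is not vertical, because $\mathcal{K}$ contains points with $f=\pm\delta$. Its slope therefore yields a $p$ with $fp-l\le -m+\epsilon$ on all of $\mathcal K$, i.e. $H_i(p)\le-m+\epsilon$.
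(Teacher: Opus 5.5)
Your proposal is correct and follows the route the paper intends: the paper omits the proof, saying that for fixed $t$ it is the $t$-continuous argument of the cited references. That argument consists of convexity, Lipschitz bounds and coercivity from the affine structure and normal controllability, the minimum of $H_i$ being realized by a zero-velocity element of the convex set of (dynamics, cost) pairs, and a convex-combination argument for the strict inequalities. You additionally supply the one point specific to this setting that the paper leaves implicit, measurability of $H_i$ in $t$ via a countable dense subset of $\mathbb{A}_i$, and you give an explicit separation proof of $\min_p H_i=-\min\{l:(0,l)\in\mathcal{K}\}$.
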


We are finally ready to prove our existence result: in Theorem \ref{teoexismod2} below we prove that the value function in Definition  \ref{VF} is a  (FL-tm) solution of \eqref{eqHJmod2} in the sense of Definition \ref{defHmismod2}. 
\begin{rem} {\em 
Here we highlight two particular cases that we consider interesting. 
\begin{itemize} 
\item[(i)]  If dynamics and running costs  are assumed to be independent of $x$ and $t$, i.e. $\bi(t,x,\alpha_i)=\li(t,x,\alpha_i)=\alpha_i$, $i=1,2$ we are in the framework of the optimal control problem studied in \cite{CaSoug}. Thus, in  Theorem \ref{teoexismod2} we prove in particular that  map  like $u^A$  in \cite[Definition 2.1]{CaSoug}  are (FL-tm)  solution of equation \eqref{eqHJmodIntro}.
\item[(ii)]  If there is no junction, i.e. $\bu \equiv \bd$,  $l_1 \equiv l_2$ and $A \equiv 0$  we are in the framework classical (HJ)-equation  in the $t$-measurable case. Therefore,  
in  Theorem \ref{teoexismod2} below the proof  of Case 1 can be seen as the proof the  value function being  a (Is-tm) subsolution in Step 1 and a (Is-tm) supersolution in Step 2, respectively.  
At the best of our knowledge this result is not present in the literature. 
\end{itemize}
}
\end{rem}

\begin{theo} \label{teoexismod2}
Assume  \eqref{ipRC2Regdin}, \eqref{pRC2Regdin2}, \eqref{ipRC2Regrcos}, \eqref{ipRCreg}, \eqref{ipRC2}, \eqref{IpControl22}, \eqref{ipdinamicain0} and \eqref{ipA0con}.  Then,  the value function $u$ in Definition  \ref{VF} is a (FL-tm) viscosity solution of \eqref{eqHJmod2} in $\R \times (0,T)$ as in Definition \ref{defHmismod2}.
\end{theo}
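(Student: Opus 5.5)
The plan is to verify Definition \ref{defHmismod2} directly from the DPP \eqref{DPP}, using Proposition \ref{regvalOnedim} for continuity and boundedness. The initial condition $u(x,0)=u_0(x)$ is immediate from Definition \ref{VF}. The argument treats the subsolution property and the supersolution property separately, each at interior points ($x_0\in\Omega_i$) and at the junction ($x_0=0$). In every case the central device is the same. If $(x_0,t_0)$ is a local max (resp. min) of $u-\big(\int_0^t b+\psi\big)$, then for trajectories $\gamma$ on $[t_0-h,t_0]$ ending at $x_0$ the DPP gives
\[
\psi(x_0,t_0)-\psi(\gamma(t_0-h),t_0-h)+\int_{t_0-h}^{t_0} b(s)\,ds \le \int_{t_0-h}^{t_0} L(s,\gamma(s),\alpha(s))\,ds
\]
(for the subsolution; for the supersolution the reverse inequality holds up to $\ve h$ along $\ve$-optimal trajectories). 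We write $\psi(x_0,t_0)-\psi(\gamma(t_0-h),t_0-h)=\int_{t_0-h}^{t_0}(\psi_t+\psi_x\dot\gamma)\,ds$. The measurable $b$ and the measurable Hamiltonian then only ever appear integrated against $ds$, and the hypothesis on $G$ holds a.e.\ in $s$, so it can be used inside the integral. At the end we divide by $h$ and let $h\to0$, using only the continuity of $G$ and of $\psi_t,\psi_x$.

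\textbf{Subsolution.} At $x_0\in\Omega_i$, fix $\alpha_i\in\A_i$ and use the constant control $\alpha_i$ backward, which is possible for small $h$ since $f_i$ is bounded. Pointwise a.e., $\psi_x\dot\gamma - l_i = f_i p - l_i \le H_i(s,\gamma,p)$ with $p=\psi_x(\gamma(s),s)$, and $H_i \le -b(s)+G_i(s,\gamma,p)$ fails to be the usable direction. The correct direction is the reverse: the sub-hypothesis is $-b+G_i\le H_i$. So the constant-control inequality has to be used differently, as follows. From the DPP we get $\int(\psi_t + f_i\psi_x - l_i + b)\,ds \le 0$ for each fixed $\alpha_i$. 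Since the sup defining $H_i$ is not attained uniformly in measurable $s$, we choose a measurable selection $\alpha_i(s)$, by \eqref{ipRC2} and the measurable selection theorem of \cite{Roc}, that nearly attains $H_i(s,\gamma(s),p(s))$; here the ODE is solved with a feedback that is frozen at $(x_0,p_0)$ and the errors are controlled by \eqref{pRC2Regdin2}, \eqref{ipRCreg} and \eqref{ipHam2inX}. This gives $\int_{t_0-h}^{t_0}(\psi_t+H_i(s,x_0,p_0)+b)\,ds\le o(h)$, hence $\int(\psi_t -b + G_i + b)\,ds\le o(h)$, i.e.\ $\psi_t(x_0,t_0)+G_i(t_0,x_0,p_0)\le0$. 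At $x_0=0$ the same computation is done with three families of trajectories: staying at $0$ with $\alpha_0=0$, which gives the term $A(s)$; entering from $\Omega_1$ with $f_1>0$, which gives $H_1^+$; and entering from $\Omega_2$ with $f_2<0$, which gives $H_2^-$. Lemma \ref{lempropH} provides these characterizations; the entering trajectories exist by \eqref{IpControl22}. The difficulty is that the max inside the hypothesis on $G_0$ varies measurably in $s$. We therefore partition $[t_0-h,t_0]$ measurably according to which of the three terms is largest (up to $\ve$), and we would like to use the corresponding strategy on each piece. Concatenating "enter from $\Omega_1$" on a scattered set is not admissible. The fix I would try first is to use each of the three strategies separately and average. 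That alone does not control the max, so instead I would compare with a Lipschitz-in-$s$ upper bound. Alternatively, and more simply, I would note that only $\psi_t+G_0\le 0$ at $t_0$ is needed. By continuity of $G_0$ there is a single dominant term of $\max\{A,H_1^+,H_2^-\}$ whose integral over $[t_0-h,t_0]$ controls $\int(G_0-b)$ along a subsequence $h\to0$, and one strategy then suffices. Making this precise is the main obstacle.

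\textbf{Supersolution.} Take $\ve h$-optimal trajectories $\gamma_h$ ending at $x_0$. These satisfy $|\gamma_h(s)-x_0|\le C h$, and by \eqref{ipRC2} together with Lemma \ref{lempropH}, pointwise a.e.\ $\psi_x\dot\gamma - L\le$ the relevant Hamiltonian. In $\Omega_i$ this gives $H_i(s,\gamma_h(s),p(s))\le -b+G_i$ (the super direction), with $p(s)$ within $\delta$ of $p_0$ for small $h$. At the junction each time $s$ lies in $I_0$, $I_1$ or $I_2$. On $I_0$ we have $-A(s)\ge -\max\{\cdots\}$. On $I_1$, near $0$, the trajectory moves with either sign; the part $f_1\le0$ is dominated by $H_1^-\le$ hmm, which must be bounded by $\max\{A,H_1^+,H_2^-\}$. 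Here I would use $H_1^-(s,x,p)\le\max\{A(s),H_1^+\}$ only at the end, via the standard "exit from $0$" argument of \cite[Section 14.4]{BaChas}, together with the $x_1,x_2$ dependence allowed in $G_0$ to absorb $\gamma_h(s)\ne0$. Integrating, dividing by $h$ and letting $h,\ve\to0$ yields $\psi_t(0,t_0)+G_0(t_0,0,0,\cdot)\ge0$.
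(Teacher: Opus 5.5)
Your outline follows the paper's route (DPP along trajectories, three families of trajectories at the junction, $\ve$-optimal trajectories for the supersolution). It stalls exactly where you say it does, and that step cannot be completed.

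At $x_0=0$ the admissible strategies are: sit at $0$; arrive from $\Omega_1$, which forces $f_1<0$ and so produces $H_1^-$, not $H_1^+$ as you wrote (the printed definition has a sign/index typo relative to \eqref{eqHJmod2} and Definition \ref{defHcont2}); arrive from $\Omega_2$, which produces $H_2^+$. These give only three separate inequalities
\[
\psi_t(0,t_0)+\frac1h\int_{t_0-h}^{t_0}\bigl(b(s)+\Phi_j(s)\bigr)\,ds\le o(1),\qquad \Phi_j\in\bigl\{A,\;H_1^-,\;H_2^+\bigr\},
\]
that is, a bound on $\max_j\frac1h\int(b+\Phi_j)$. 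The hypothesis on $G_0$, however, only bounds $\frac1h\int(b+\max_j\Phi_j)$ from below. No trajectory ending at $0$ can switch measurably between sitting at the junction and arriving from an edge. The paper's proof gets past this point by ``putting together'' \eqref{unoA2}, \eqref{dueA2}, \eqref{zeroA2}, which is the same unjustified interchange of $\max$ and $\int$. Your ``single dominant term along a subsequence'' does not exist in general, because the subsolution property itself fails.

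Here is a counterexample. Take $f_i=\alpha_i\in[-1,1]$ and $l_i=0$, so $H_1=H_2=|p|$. Take $l_0=-\mathbf{1}_E$ and $A_0=0$ in \eqref{ipA0con}, so $A=\mathbf{1}_E$, and $u_0=-\frac12\min\{|x|,R\}$ with $R$ large. Let $E=\bigcup_{k\ge k_0}[t_0-2^{-k},\,t_0-2^{-k}+2^{-k-3}]$. Then $|E\cap[\sigma,t_0]|\le\frac12(t_0-\sigma)$ for all $\sigma<t_0$, and $|E\cap[t_0-h,t_0]|\ge h/8$ for small $h$.

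Straight paths give $u(x,t)\le-\frac12(|x|+t)$. A path first reaching $0$ at time $\sigma$ costs at least $-\frac\sigma2-|E\cap[\sigma,t_0]|\ge-\frac{t_0}2$, so $u(0,t_0)=-\frac{t_0}2$. Now take $b=-\max\{A,\frac12\}$ and $\psi=\kappa-\frac12|x|+\frac1{16}(t-t_0)$ with a suitable constant $\kappa$. Then $(0,t_0)$ is a local maximum of $u-\int_0^tb-\psi$; for $t<t_0$ use $\int_{t_0-h}^{t_0}(\max\{A,\frac12\}-\frac12)=\frac12|E\cap[t_0-h,t_0]|\ge h/16$. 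Set $G_0=-\bigl|\max\{H_1^-(p_1),H_2^+(p_2)\}-\frac12\bigr|$. It satisfies the hypothesis of Definition \ref{defHmismod2}, because $r\mapsto\max\{A(s),r\}$ is $1$-Lipschitz. It vanishes at $(p_1,p_2)=(\psi_x(0^+,t_0),\psi_x(0^-,t_0))=(-\frac12,\frac12)$, and the same holds with the printed $H_1^+,H_2^-$. Yet $\psi_t+G_0=\frac1{16}>0$.

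The supersolution half has an analogous defect. The paper replaces an excursion control by one with $f_1\le0$ at equal cost, which tacitly uses that the sets in \eqref{ipRC2} do not move with $s$. Take $l_1=\alpha_1q(s)$ with $q=\pm1$ alternating on ever finer blocks, and $u_0=0$, $A=0$. An excursion in $\Omega_1$ then gains $1$ per unit time at zero displacement, while $H_1^-(s,0)=\mathbf{1}_{\{q=1\}}$ averages $\frac12$. The test $b=-\mathbf{1}_{\{q=1\}}$, $G_0=|p_1|+|p_2|$, $\psi_t(0,t_0)=-\frac12$ violates the supersolution inequality, so your unspecified ``exit from $0$'' step cannot be filled in either.

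A smaller, repairable point concerns the interior case. In $\Omega_i$, freezing the control at $(x_0,p_0)$ leaves the error $\int_{t_0-h}^{t_0}\omega_R(s,Ch)\,ds$, which need not be $o(h)$ at a prescribed $t_0$; try $\omega(s,k)=\min\{1,k/|s-t_0|\}$. Instead, freeze on a subpartition of mesh $\eta\ll h$, use that the $G_i$-hypothesis holds on a whole neighbourhood of $(x_0,p_0)$, and let $\eta\to0$ first. With that change, your measurable selection is the right fix for the paper's ``taking the supremum over all $\bar\alpha_i$'', which interchanges $\sup$ and $\int$ in the wrong direction.
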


{\bf Proof.} Let us  first remark that by construction and Proposition  \ref{regvalOnedim}  the value function $u$ is a continuous bounded function such that $u(x,0)=u_0(x)$ for any  $x \in \R$. 
Before detailing  the proof of $u$ being both a (FL-tm) subsolution and a (FL-tm) supersolution of \eqref{eqHJmod2}  in Steps 1  and 2 below, we 
derive a  useful identity.   

Consider any admissible trajectory arriving at time $t_0$ at the junction  $\{ 0 \}$, i.e. $(\gamma, \alpha) \in {\T}_{\tau,\gamma(\tau)}^{t_0,0} $ with $\tau \in [0,t_0)$. 
For any  $\psi=(\psi_1,\psi_2) \in PC^1(\R \times (0,T))$ we have 
\be \label{stimatestA2}
\begin{array}{l}
\dis \psi(0,t_0)-\psi(\gamma(\tau),\tau)-\int_\tau^{t_0}   L(s,\gamma(s), \alpha) \: ds =  \dis  o(t_0-\tau)+\psi_t(0,t_0)(t_0-\tau)+  \\
 \dis  \quad  \quad  \quad+\int_\tau^{t_0}  \big( \sum_{i=1,2} \left[ (\psi_i)_x(0,t) \bi(s, \gamma(s),\alpha_i(s))  - l_i( s,\gamma(s),\alpha_i(s))  \right] \mathbf{I}_{\Ii} (s) + A(s)  \mathbf{I}_{\I0} (s) \big) ds  \: .
\end{array}
\ee  
where we set $(\psi_1)_t (0,t)=(\psi_2)_t (0,t):=\psi_t(0,t)$ and we used the boundedness of  $\bi, \li$ in \eqref{ipRC2Regdin}, \eqref{ipRC2Regrcos}.

\

{\bf Step 1 (Subsolution). }   
Let   $b \in L^1(0,T)$, $\psi \in PC^1(\R \times (0,T))$ and $(x_0,t_0)$ be a local  maximum point   in $\R \times (0,T)$ of 
$ \dis    u(x,t) - \big( \int_0^t b(s) ds + \psi(x,t)\big)$.  \\
Consider  $0 < \tau < t_0$ and any admissible trajectory $(\gamma, \alpha) \in  {\T}_{\tau,\gamma(\tau)}^{t_0,x_0}$. \\
Thanks to the DPP, there exists  $\eta >0$ such that  for  $\tau < t_0$,$\vert \tau - t_0 \vert \leq \eta \:, \: \vert  \gamma(\tau)- x_0\vert \leq \eta$  we have 
 \be \label{subALLOmega1}
\int_\tau^{t_0} b(s) ds +\psi(x_0,t_0) -\psi(\gamma(\tau),\tau)-\int_\tau ^t L(s,\gamma(s),\alpha(s)) \: ds \leq 0  \: .
\ee

We need to distinguish two cases depending if $x_0$ is or not on the junction.  

{\bf Case 1:  If $ x_0 \in \Omega_i$ for $i=1,2$ (away from the junction). } 

Let $G_i(t,x,p) \in C([0,T] \times \R \times \R)$   and $G_i$ satisfies for some $\delta >0 $, 
\be \label{stimaGOmega1}
-b(t)+G_i(t,x,p) \leq H(t,x,p) \mbox{ for }  |p -  \psi_x(x_0,t_0) | \leq \delta \: , |x -x_0| \leq \delta  \mbox{ a.e. }  |t - t_0| \leq \delta  .
 \ee
 (We suppose here $\delta$ small enough to have $|x -x_0| \leq \delta \Rightarrow x \in  \Omega_i$.)   \\
 Our aim is to prove that 

  \[
   \psi_t(x_0, t_0)+ G_i(t_0, x_0, \psi_x(x_0,t_0) ) \leq 0  . 
 \]
 
We consider now any admissible trajectory with constant control $(\gamma, \bar{\alpha_i}) \in  {\T}_{\tau,\gamma(\tau)}^{t_0,x_0}$  and $\tau-t_0$ small enough to have for any $s\in  [\tau , t_0]$,  $\gamma(s) \in \Omega_i$  and  $\vert \gamma(s)- x_0 \vert \leq \min\{\delta, \eta\}$ so that \eqref{subALLOmega1} and   \eqref{stimaGOmega1}
 apply.  Thus, arguing as in the classical case,    
 $$
\int_\tau^{t_0} b(s) ds +\int_\tau^{t_0}  \left[ \psi_t(\gamma(s),s) + \psi_x(\gamma(s),s) \bi(s, \gamma(s), \bar{\alpha_i} )  -  \li(s, \gamma(s), \bar{\alpha_i}) \right] \: ds  \leq 0.
$$

Taking the supremum over all $\bar{\alpha_i} \in \A_i$ we have 
$$
\int_\tau^{t_0} b(s) ds +\int_\tau^{t_0}  \left[   \psi_t(\gamma(s),s) +   H_i(s,\gamma(s), \psi_x(\gamma(s),s))  \right]  \: ds  \leq 0 
$$ 
that by \eqref{stimaGOmega1} gives 
$$
 \int_\tau^{t_0}   \psi_t(\gamma(s),s) +   G_i(s,\gamma(s), \psi_x(\gamma(s),s)) \: ds  \leq 0. 
$$ 
Conclusion 
then easily follows by dividing by $t_0-\tau$, letting $\tau \ds t_0$ and recalling that $G_i$ is continuous.

{\bf Case 2: If $ x_0 =0$ (on the junction).} 

 Let $G_0 \in C([0,T] \times \overline{\Omega_1} \times \overline{\Omega_2}  \times \R^2)$ be such that, for some $\delta >0 $ 
 \be \label{Gsubdef2}
-b(s)+G_0(s,x_1, x_2 ,(p_1,p_2)) \leq \max\{ A(s), \Hu^-(s,x_1,p_1), \Hd^+(s,x_2,p_2) \}    
\ee
for $|p_1 - \psi_x(0^-,t_0) | \leq \delta $,  $|p_2 - \psi_x(0^+,t_0) | \leq  \delta$, $|x_1|, |x_2| \leq \delta$ and a.e.  $| s- t_0| \leq \delta$, our aim is to prove that 

$$
 \psi_t(0,t_0)+ G_0(t_0, 0,0, (\psi^1_x(0^+,t_0),\psi^2_x(0^-,t_0)) ) \leq 0. 
 $$

We consider now in \eqref{subALLOmega1} three different choices of $(\gamma, \alpha) \in  {\T}_{\tau,\gamma(\tau)}^{t_0,0}$ as follows. \\
{\bf (A)} For any constant control  $\bar{\alpha}_1 \in A_1$  such that $b_1(t_0,0,\alpha_1)  >0 $, by convexity and normal controllability assumptions \eqref{ipRC2}-\eqref{IpControl22} there exists an admissible trajectory 
$(\gamma_1, \bar{\alpha}_1) \in  {\T}_{\tau,\gamma(\tau)}^{t_0,0}$ that remains in $\Omega_1$ for $\tau-t_0$ small enough. 
Therefore,   \eqref{stimatestA2}-\eqref{subALLOmega1} reads in this case
 $$
\int_\tau^{t_0} b(s) ds +\int_\tau^{t_0} (\psi_1)_x(0,t)b_1(s,\gamma_1(s),\bar{\alpha}_1)  - l_1( s,\gamma_1(s),\bar{\alpha}_1)   \: ds+ \psi_t(0,t_0)(t_0-\tau)+ o(t_0-\tau) \leq 0.
$$
Thus 
$$
\frac{1}{(t_0-\tau)} \int_\tau^{t_0}  \Big[ b(s) + (\psi_1)_x(0,t_0) b_1(s,\gamma_1(s),\bar{\alpha}_1)  - l_1( s,\gamma_1(s),\bar{\alpha}_1) \Big]  ds  + \psi_t(0,t_0) \leq o(1) \: .
$$
Fix $s \in  (\tau,t_0)$,  taking the supremum over all $\alpha_1$ such that $b_1(s,\gamma_1(s),\alpha_1)< 0$ and recalling Lemma \ref{lempropH}  we obtain that 
\be \label{unoA2}
\frac{1}{(t_0-\tau)}\int_\tau^{t_0} \left[ b(s)  + H_1^-(s,\gamma_1(s),(\psi_1)_x(0,t))  \right]\: ds + \psi_t(0,t_0) \leq o(1).
\ee
{\bf (B)} For any constant control  $\bar{\alpha}_2 \in \A_2$ such that $b_2(t_0,0, \bar{\alpha}_2)  >0 $ we can argue like case (A) and similarly obtain
\be \label{dueA2}
\frac{1}{(t_0-\tau)}\int_\tau^{t_0} \left[ b(s) + H_2^+(s,\gamma_2(s),(\psi_2)_x(0,t_0)) \right] \: ds + \psi_t(0,t_0) \leq o(1).
\ee
{\bf (C)}  Since $0 \in \A_0$ by assumption \eqref{ipdinamicain0},  the trajectory $\dot{\gamma}(s)=0, \forall s \in (\tau,t_0)$, $\gamma(t_0)=0$ is admissible. Therefore \eqref{stimatestA2}-\eqref{subALLOmega1} become
\be \label{zeroA2}
\frac{1}{(t_0-\tau)}\int_\tau^{t_0}  \left[ b(s)  + A(s)  \right] \: ds+ \psi_t(0,t_0)  \leq o(1)
\ee 
Putting together \eqref{unoA2}, \eqref{dueA2}, \eqref{zeroA2}, we have 
$$
\frac{1}{(t_0-\tau)}\int_\tau^{t_0} \Big[ b(s) +\max\{ A(s), H_1^-(s,\gamma_1(s),(\psi_1)_x(0,t_0)),H_2^+(s,\gamma_2(s), (\psi_2)_x(0,t_0)) \}  \Big]\: ds+ \psi_t(0,t_0)  \leq o(1).
$$
By \eqref{Gsubdef2}, for   $s \in (\tau,t_0)$, $t_0 - \tau \leq \delta$,  
$$
\begin{array}{l}
b(s) \geq   G_0(s, \gamma_1(s), \gamma_2(s), (\psi_1)_x(0,t_0), (\psi_2)_x(0,t_0))+  \\
\quad \quad  \quad \quad \quad \quad  -\max\{ A(s), H_1^-(s,\gamma_1(s),(\psi_1)_x(0,t_0)),H_2^+(s,\gamma_2(s),(\psi_2)_x(0,t_0)) \} 
\end{array}
$$
therefore, for $t_0 - \tau \leq \min\{\delta,\eta\} $, 
$$
\frac{1}{(t_0-\tau)}\int_\tau^{t_0}  G_0(s,  \gamma_1(s),\gamma_2(s), ((\psi_1)_x(0,t_0), (\psi_2)_x(0,t_0)))  \: ds+ \psi_t(0,t_0)  \leq o(1)
$$
Thanks of the continuity of $G_0$ the conclusion follows then by letting $\tau$ tends to $t_0$.

{\bf Step 2 (Supersolution). } 

Let $\psi \in PC^1(\R \times (0,T))$, $b \in L^1(0,T)$  and $(x_0, t_0)$ be a local minimum point in  $\R \times (0,T)$ of 
$ \dis u(x,t) -(  \int_0^t b(s) ds + \psi(x,t))$. 

By DPP,  for any $\ve >0$, there exists  a trajectory $(\gamma_\ve, \alpha_\ve) \in  {\T}_{\tau,\gamma_\ve(\tau)}^{t_0,x_0}$ such that 
 $$
  u(x_0,t_0) +\ve (t_0- \tau) >  u(\gamma_\ve(\tau),\tau) + \int_\tau ^{t_0} L(s,\gamma_\ve(s),\alpha_\ve(s)) \: ds .
 $$
Thus, there exists  $\eta >0$ such that  for $\tau < t$, $\vert \tau - t_0 \vert \leq \eta \:, \: \vert \gamma_\ve(\tau) - x_0\vert \leq \eta$, we have 
\be \label{supALLOmega1}
 \int_\tau^{t_0} b(s) ds+\psi(x_0,t_0)- \psi(\gamma_\ve(\tau),\tau)-  \int_\tau ^{t_0} L(s,\gamma_\ve(s),\alpha_\ve(s)) \: ds \geq -\ve(t_0-\tau).
 \ee

We need to distinguish two cases depending if $x_0$ is or not on the junction.  

{\bf Case 1:  If $ x_0 \in \Omega_i$ for $i=1,2$ (away from the junction). } 

Let $G_i(t,x,p) \in C([0,T] \times \R \times \R)$   and $G_i$ satisfies for some $\delta >0 $, 
 \be \label{stimaGOmega1super}
-b(t)+G_i(t,x,p) \geq H_i(t,x,p) \mbox{ for }  |p - \psi_x(x_0,t_0) | \leq \delta \: , |x -x_0| \leq \delta  \mbox{ and a.e.}  |t - t_0| \leq \delta  .
 \ee
(We suppose here $\delta$ small enough to have $|x -x_0| \leq \delta \Rightarrow x \in  \Omega_i$.)  \\
Our aim is to prove that 
$$
   \psi_t(x_0, t_0)+ G_i(t_0, x_0, \psi_x(x_0,t_0) ) \geq 0  .
$$
Take now $\tau-t_0$ small enough to have  $\gamma_\ve(s) \in \Omega_i, \forall s\in  [\tau , t_0]$ and  $\vert \gamma_\ve(s)- x_0 \vert \leq \min\{\delta, \eta\}$ so that \eqref{supALLOmega1} and  \eqref{stimaGOmega1super} apply.
Thus, 
$$
\int_\tau^{t_0} b(s) ds +\int_\tau^{t_0}  \Big[ \psi_t(\gamma_\ve(s),s) + \psi_x(\gamma_\ve(s),s) \bi(s, \gamma_\ve(s), \alpha_\ve(s) )  -  \li(s, \gamma_\ve(s), \alpha_\ve(s) ) \Big] \: ds   \geq -\ve(t_0-\tau).
$$
Observe now that for any $s \in   [\tau , t_0]$, 
$$
\sup_{\alpha_i \in \A_i}  \: \{   \psi_x(\gamma_\ve(s),s) \bi(s, \gamma_\ve(s), \alpha_1 ) - \li(s, \gamma_\ve(s), \alpha_1 ) \} \geq \psi_x(\gamma_\ve(s),s) \bi(s, \gamma_\ve(s), \alpha_\ve(s) ) - \li(s, \gamma_\ve(s), \alpha_\ve(s) ) 
$$ 
therefore  
$$
 \int_\tau^{t_0} b(s)   + \psi_t(\gamma_\ve(s),s) ds  +\int_\tau^{t_0} H_i( s, \gamma_\ve(s),\psi_x(\gamma_\ve(s),s) ) ds  \geq -\ve(t_0-\tau)
$$
and by \eqref{stimaGOmega1super}, 
$$
 \int_\tau^{t_0} \psi_t(\gamma_\ve(s),s) ds  + G_i( s, \gamma_\ve(s),\psi_x(\gamma_\ve(s),s) ) ds  \geq -\ve(t_0-\tau).
$$
Conclusion then easily follows by dividing by $t_0-\tau$, $\tau \ds t_0$ and recalling that $G_i$ is continuous.

{\bf Case 2: If $ x_0 =0$ (on the junction).} 

Let $G_0 \in C([0,T] \times \overline{\Omega_1} \times \overline{\Omega_2}  \times \R^2)$ be such that, for some $\delta >0 $ 
 \be \label{Gsubdef2A}
-b(s)+G_0(s,x_1, x_2 ,(p_1,p_2)) \geq \max\{ A(s), \Hu^-(s,x_1,p_1), \Hd^+(s,x_2,p_2) \}    
\ee
for $|p_1 - \psi_x(0^-,t_0) | \leq \delta $,  $|p_2 - \psi_x(0^+,t_0) | \leq  \delta$, $|x_1|,|x_2| \leq \delta$ and a.e.  $| s- t_0| \leq \delta$, our aim is to prove that 
$$
 \psi_t(0,t_0)+ G_0(t_0, 0,0 , (\psi^1_x(0^+,t_0),\psi^2_x(0^-,t_0)) ) \geq 0. 
$$
In the particular case of $(\gamma_\ve, \alpha_\ve) \in  {\T}_{\tau,\gamma_\ve(\tau)}^{t_0,0}$  only staying in $\Omega_1$, $\Omega_2$ or $\{0\}$ the conclusion easily follows arguing as above in Case 1. \\ 
Let now $I=(\Iu, \Id, \I0)$ be  the partition of  
 $(0,t)$  of measurable sets such that $ \gamma_\ve(s) \in \Omega_i$ if $s \in I_i$, $i=1,2$, $\gamma_\ve(s)=0$ if $s \in I_0$. 
 Suppose  that $I_i \not = \emptyset$ for at least two of the three sets $I_i$. Assume, for example that is $I_1 \not = \emptyset$,  we write 
$$
 \{ s \in (\tau,t_0) \: :  \: \gamma_\ve(s)  >0  \}= \bigcup_{k}  \: \left] s^\ve_k, s^\ve_{k+1} \right [  .
$$
Note that $ \gamma_\ve(s^\ve_{k+1})=0$ and  $ \gamma_\ve(s^\ve_{k}) > 0$  if $s^\ve_{k} > \tau$ but if $s^\ve_{k}=\tau$ for some $k$, one can have $ \gamma_\ve(s^\ve_{k}) =0$.
Thus, 
$$
\frac{1}{s^\ve_{k+1}-s^\ve_k} \int_{s^\ve_k}^{s^\ve_{k+1}} b_1(s, \gamma_\ve(s), \alpha^\ve_1(s)) ds = \frac{\gamma_\ve(s^\ve_{k+1})-\gamma_\ve(s^\ve_{k}) }{s^\ve_{k+1}-s^\ve_k} \leq 0 
$$
therefore,  by convexity and normal controllability assumptions  \eqref{ipRC2}-\eqref{IpControl22} and a measurable selection theorem (e.g. Corollary of Proposition 1 in \cite{Roc}) there exists  a control $\bar{\alpha_1}$  
such that  $b_1(s, \gamma_\ve(s), \bar{\alpha_1}(s)) \leq  0$
for $s \in [s^\ve_k,s^\ve_{k+1}]$ and 
 $$
 \int_{s^\ve_k}^{s^\ve_{k+1}} \Big( b_1(s, \gamma_\ve(s), \alpha^\ve_1(s)), l_1( s, \gamma_\ve(s),\alpha^\ve_1(s)) \Big) ds =   \int_{s^\ve_k}^{s^\ve_{k+1}} \Big( b_1(s, \gamma_\ve(s), \bar{\alpha_1}(s) ), l_1( s, \gamma_\ve(s),\bar{\alpha_1}(s)) \Big) ds +o(s^\ve_{k+1}-s^\ve_k). 
 $$
Thus
$$
\begin{array}{l}
\dis \int_{s^\ve_k}^{s^\ve_{k+1}} (\psi_1)_t(\gamma_\ve(s),s) + (\psi_1)_x(\gamma_\ve(s),s) b_1(s, \gamma_\ve(s), \alpha^\ve_1(s)) - l_1(s, \gamma_\ve(s),\alpha^\ve_1(s))   \: ds     \\
\dis =\int_{s^\ve_k}^{s^\ve_{k+1}} \psi_t(0,t) + (\psi_1)_x(0,t) b_1(s, \gamma_\ve(s), \alpha^\ve_1(s))   - l_1(s, \gamma_\ve(s),\alpha^\ve_1(s))   \: ds + o(s^\ve_{k+1}-s^\ve_k)  \\
\dis = \int_{s^\ve_k}^{s^\ve_{k+1}}  \big[ \psi_t(0,t) + (\psi_1)_x(0,t) b_1(s, \gamma_\ve(s), \bar{\alpha_1} (s))- l_1( s, \gamma_\ve(s),\bar{\alpha_1}(s)) \big] \: ds + o(s^\ve_{k+1}-s^\ve_k)  \\
\dis \leq \int_{s^\ve_k}^{s^\ve_{k+1}}   \Hu^-(s, \gamma_\ve(s),(\psi_1)_x(0,t)) \: ds +  \psi_t(0,t_0)(s^\ve_{k+1}-s^\ve_k) +  o(s^\ve_{k+1}-s^\ve_k) \: .
\end{array}
$$
Therefore, using that by construction $ \dis \int_{I_1}=   \sum_{k} \int_{s^\ve_k}^{s^\ve_{k+1}} $ we have 
$$
\begin{array}{l}
\dis  \int_{I_1} (\psi_1)_t(\gamma_\ve(s),s) + (\psi_1)_x(\gamma_\ve(s),s)   b_1(s, \gamma_\ve(s), \alpha^\ve_1(s)) - l_1(s, \gamma_\ve(s),\alpha^\ve_1(s))   \: ds    \\

\dis \leq   \int_{I_1}  \Hu^-(s, \gamma_\ve(s),(\psi_1)_x(0,t)) \: ds + \psi_t(0,t_0)   \vert I_1 \vert + o(t_0-\tau). \\
 \end{array} 
 $$

In the case of $I_2 \not = \emptyset$,  we can write 
$$
 \{ s \in (\tau,t_0) \: :  \: \gamma_\ve(s)  < 0  \}= \bigcup_{k}  \: \left] s^\ve_k, s^\ve_{k+1} \right [
$$
and arguing similarly obtain

$$
\begin{array}{l}
\dis  \int_{I_2} (\psi_2)_t(\gamma_\ve(s),s) + (\psi_2)_x(\gamma_\ve(s),s)  b_2(s, \gamma_\ve(s),\alpha^\ve_2(s))  - l_2(s, \gamma_\ve(s), \alpha^\ve_2(s))   \: ds    \\

\dis \leq \int_{I_2}   \Hd^+(s,\gamma_\ve(s),(\psi_2)_x(0,t_0)) \: ds + \psi_t(0,t_0)  \vert I_2 \vert + o(t_0-\tau) \\
\end{array} 
$$

Note that,  since $t_0 \in I_0$
$$
\begin{array}{r}
\dis  \int_{I_0} (\psi_t(0,s)   + A(s)  ) \: ds   \dis  \leq    \psi_t(0,t_0) \vert I_0 \vert   + \int_{I_0}  A(s)  ds + o(t_0-\tau) \: .\\
 \end{array} 
 $$
Since $  I_0 \cup  I_1 \cup  I_2  =(t-\tau)$, we can use the above estimates in \eqref{stimatestA2}-\eqref{supALLOmega1} and conclude  
that 
$$
\begin{array}{rl}
\dis  -\ve(t_0-\tau) & \dis  \leq  \int_\tau^{t_0} b(s) ds+\psi(0,t_0)- \psi(\gamma_\ve(\tau),\tau)-  \int_\tau ^{t_0} L(s,\gamma_\ve(s),\alpha_\ve(s)) \: ds  \\
\dis  & \dis \leq  \int_\tau^{t_0} b(s) ds +  \psi_t(0,t_0) ( t_0-\tau) +  \int_{I_2}   \Hd^+(s,\gamma_\ve(s),(\psi_2)_x(0,t_0)) \: ds+ \\
& \dis \qquad \qquad +\int_{I_1}  \Hu^-(s, \gamma_\ve(s),(\psi_1)_x(0,t_0)) \: ds +  \int_{I_0}  A(s)  ds + o(t_0-\tau) \leq \\
& \dis  \leq \int_\tau^{t_0} G_0(s, \gamma_\ve(s),\gamma_\ve(s),  (\psi_1)_x(0,t_0), (\psi_2)_x(0,t_0)) ds  +  \psi_t(0,t_0) ( t_0-\tau)+ o(t_0-\tau) 
 \end{array} 
$$
where we also used  \eqref{Gsubdef2A}.  Dividing by $(t_0-\tau)$ and letting $\tau \ds t_0$ we finally have that 
$$
 \psi_t(0,t_0) + G_0(t_0, 0,0, (\psi_1)_x(0,t_0), (\psi_2)_x(0,t_0)) \geq 0  \: .
$$

 \hfill $\Box$

\section{On possible generalizations} \label{gene}

In this section we aim to discuss some generalizations of Definition \ref{defHmismod2}  and comparison results Theorem \ref{compmod2LOC} and Remark \ref{gendim1}, in particular weakening the convexity assumption.  

Beyond  the construction an approximating sequence, the key ingredient in the proof of the comparison principle  is the holding of a comparison result for the approximating equations  \eqref{eqHJmod2}$_n$.   Roughly speaking,  one can mimic the proof of Theorem \ref{compmod2LOC}   in any framework for which a comparison principle holds for the approximating  continuous Hamiltonians and flux limiter. 
Let us now discuss how to exploit this idea in some natural generalisation.

{\em Non-convex Hamiltonians.} 

The theory of Flux-limited solution has been developed in \cite{ImMo1}  directly in the case of quasi-convex and coercive Hamiltonians. 
Therefore \cite[Theorem 5.8]{ImMo1} and consequently Theorem \ref{compmod2LOC},  still hold if one replace assumption \eqref{ipHami2Conv}  with 
\be \label{ipHami2Quasiconv}
H_i(t,x, \cdot) : \R \ds \R \mbox{ are  quasi-convex  and coercive uniformly with respect to } (t,x).
\ee
(For the quasi-convex case, one can also refer to \cite[Section 14.3.2]{BaChas}. ) \\
Moreover, in the case of Hamiltonian depending only on the gradient variable, i.e. equation \eqref{eqHJmodIntro}, N. Forcadel and R. Monneau proved in  \cite{ForMon} a comparison result assuming Hamiltonians to be only continuous and coercive and continuous Flux limiter. Based on \cite[Thorem 6.1]{ForMon} our result can then be extended to equation \eqref{eqHJmodIntro} with $A \in L^\infty(0,T)$ and $H_i(\cdot)$ continuous and coercive.   
 
\newpage
{\em Hamiltonian depending also on the $u$-variable.}  

Result can be easily extended to Hamiltonians continuously depending also on $u$ by classically adding to regularity assumptions on $H$ a $u$-monotonicity hypothesis. Precisely, in the $t$ continuous case, comparison results  \cite[Theorem 8.1]{Is85} and \cite[Theorem 14.2 or 14.3]{BaChas}  hold if we add to assumptions of Theorem  \ref{compmod2LOC}  the following
$$
 \exists \nu  \in \R  \mbox{ s. t. }   u  \ds H(t,x,u,p)- \nu u \mbox{  is nondecreasing in } \R \:, \forall (x,p)\in \R \times \R, \mbox{ a.e. } t \in (0,T).
$$
Therefore, also adding the $u$ dependence in assumption (AP)-\eqref{ipHam2inTmodulus},  we can adapt the proof of Theorem \ref{compmod2LOC} by taking the sup also on $|u |\leq K$ in the definition of $k_n$, i.e. in  \eqref{defknLOC}-\eqref{defknoudLOC} in Step 2. (Note that this is not restrictive  when dealing with bounded solutions.)

{\em Networks}

As in the original paper of C. Imbert and R. Monneau \cite{ImMo1},  the definition of Flux Limited solutions  given for one junction on the line can be naturally extended to networks,  i.e. where more junctions are allowed.  Indeed,  in \cite[Section 5]{ImMo1} Hamilton Jacobi equation are defined on networks and a comparison principle is proved (Theorem 5.8) for continuous, quasi convex and coercive Hamiltonians. We stress here that assumptions (H0)-(H6), (A0)-(A2) in Section 5.3 are  the analogous of hypotheses \eqref{ipHam2inX}, \eqref{ipHam2inP}, \eqref{ipHami2Quasiconv}, \eqref{ipA02}, \eqref{ipA2}. The only difference is that, for networks,  all bounds and constants are assumed to be uniform with respect to the set of edges and vertices.  It is then  our belief that Definition \ref{defHmismod2} and the results in Section  \ref{onedimconvex} can be extended to networks by only requiring the same uniformity assumptions as in the continuous case in all hypotheses.

\vspace{0.3cm} 

{\bf Acknowledgments.} 
The author would like to thank Pierre Cardaliaguet for introducing her to this problem and Regis Monneau for the fruitful discussions. 


\end{document}